\newtheorem{theo}{Theorem}[section]
\newtheorem{lemm}[theo]{Lemma}
\newtheorem{coro}[theo]{Corollary}
\newtheorem{rema}[theo]{Remark}
\newtheorem{defi}{Definition}
\numberwithin{equation}{section}
\newcommand{\R}{\mathbb{R}}
\newcommand{\eqdef}{\stackrel{{\rm{def}}}{=}}
\begin{document}

\begin{center} \large{\bf Existence and regularity of weak solutions for singular elliptic problems} 
\end{center}
%


\vspace{ -1\baselineskip}

{\small
\begingroup\small
\begin{center}
{\sc B. Bougherara}\\
 LMAP (UMR CNRS 5142) Bat. IPRA,
   Avenue de l'Universit\'e \\
   F-64013 Pau, France\\
  {\it e-mail: brahim.bougherara@univ-pau.fr}\\
   [10pt]
{\sc J. Giacomoni}\\
   LMAP (UMR CNRS 5142) Bat. IPRA,
   Avenue de l'Universit\'e \\
   F-64013 Pau, France
\\
 {\it e-mail: jacques.giacomoni@univ-pau.fr}\\
[10pt]
 {\sc J. Hern\'andez}\\
Departemento de Matem\'aticas, Universidad Aut\'onoma de Madrid, 28049 Madrid, Spain\\
{\it e-mail: jesus.hernandez@uam.es}\\
[10pt]

\end{center}
\endgroup
}

\begin{abstract}
In the present paper we investigate the following semilinear singular elliptic  problem:
\begin{equation*}
(\rm P)\qquad 
\left  \{ \begin{array}{l}
-\Delta u = \dfrac{p(x)}{u^{\alpha}}\quad \text{ in } \Omega \\
u = 0\  \text{ on } \Omega,\ u>0 \text{ on } \Omega,
\end{array}
\right .
\end{equation*}
where $\Omega$ is a regular bounded domain of $\mathbb R^{N}$, $\alpha\in\mathbb R$, $p\in C(\Omega)$ which behaves as $d(x)^{-\beta}$ as $x\to\partial\Omega$ with $d$ the distance function up to the boundary and $0\leq \beta <2$. We discuss below the existence, the uniqueness and the stability of the weak solution $u$ of the problem (P). We also prove accurate estimates  on the gradient of the solution near the boundary $\partial \Omega$. Consequently, we can prove that the solution belongs to $W^{1,q§}_0(\Omega)$ for $1<q<\bar{q}_{\alpha,\beta}\eqdef\frac{1+\alpha}{\alpha+\beta-1}$ optimal if $\alpha+\beta>1$. 
\end{abstract}

\section{Introduction}
In this paper, we deal with the following quasilinear elliptic problem  $({\rm P})$:
\begin{equation*}
(\rm P)\qquad 
\left  \{ \begin{array}{l}
-\Delta u = \dfrac{p(x)}{u^{\alpha}}\quad \text{ in } \Omega \\
u = 0\  \text{ on } \Omega,\ u>0 \text{ on } \Omega,
\end{array}
\right .
\end{equation*}
where $\Omega$ is an open bounded domain with smooth boundary in $\mathbb R^{N}$, $0<\alpha $ and $p$ is a nonnegative function. 

Nonlinear elliptic singular boundary value problems have been  studied during the last forty years in what concerns existence, uniqueness (or multiplicity) and regularity of positive solutions.

The first relevant existence results for a class of problems including the model case $({\rm P})$ with $p$ smooth and $\alpha>0$, were obtained in two important papers by [{\sc Crandall-Rabinowitz-Tartar}] \cite{CrRaTa} and [{\sc Stuart}] \cite{St}. Actually both papers deal with much more general problems  regarding the differential operator and the nonlinear terms. They prove the existence of classical solutions in the space $C^2(\Omega)\cap C(\overline{\Omega})$ by using some kind of approximation  procedure: in \cite{CrRaTa}, the nonlinearity in $({\rm P})$ is replaced by the regular term $\frac{p(x)}{(u+\varepsilon)^\alpha}$ with $\varepsilon>0$ and the authors then show that the approximate problem has a unique solution $u_\varepsilon$ and that $\{u_\varepsilon\}_{\varepsilon>0}$ tends to a smooth function $u^*\in C^2(\Omega)\cap C(\overline{\Omega})$ as $\varepsilon\to 0^+$ which satisfies $({\rm P})$ in the classical sense. A different approximation procedure is used in \cite{St}. These results were  extended in different ways by many authors, we can mention the papers [{\sc Hernandez-Mancebo-Vega}] \cite{HeMaVe}, \cite{HeMaVe2}, the surveys [{\sc Hernandez-Mancebo}] \cite{HeMa} and [{\sc Radulescu}] \cite{Ra} and the book [{\sc Gerghu-Raduslescu}] \cite{GhRa} and the corresponding references. We point out that the existence results in \cite{HeMaVe} and \cite{HeMaVe2} are obtained by applying the method of sub and supersolutions without requiring some approximation argument.

The regularity of solutions was also studied in these papers and the main regularity results were stated and proved in [{\sc Gui-Hua Lin}] \cite{GuLi}.  For Problem $({\rm P})$ with $p\equiv\, 1$, the authors obtain that the solution $u$ verifies
\begin{itemize}
\item[(i)] If $0<\alpha<1$, $u\in C^{1,1-\alpha}(\overline{\Omega})$.
\item[(ii)] If $\alpha>1$, $u\in C^{\frac{2}{1+\alpha}}(\overline{\Omega})$.
\item[(iiii)] If $\alpha=1$, $u\in C^\beta(\overline{\Omega})$ for any $\beta\in (0,1)$.
\end{itemize}
Concerning weak solutions in the usual Sobolev spaces, [{\sc Lazer-McKenna}] \cite{LaKe} prove that the classical solution belongs to $H^1_0(\Omega)$ if and only if $0\leq \alpha<3$. This result was generalized later for $p(x)=d(x)^\beta$ with $d(x)\eqdef d(x,\partial\Omega)$ with the  restrictions $\beta>-2$ in [{\sc Zhang-Cheng}] \cite{ZhCh} and with $0<\alpha-2\beta<3$ in [{\sc Diaz-Hernandez-Rakotoson}] \cite{DiHeRa}. Very weak solutions in the sense given in [{\sc Brezis-Cazenave-Martel-Ramiandrosoa}] \cite{BrCaMaRa} using the results for linear equations in [{\sc Diaz-Rakotoson}] \cite{DiRa} are studied in \cite{DiHeRa}.
In the present paper, we give direct and very simple proofs  avoiding the heavy and deep machinery of the classical linear theory (Schauder theory and $L^p$- theory used in \cite{CrRaTa} and \cite{St}) in order to prove existence results for solutions between ordered sub and supersolutions. We do not use any approximation argument. Our main tools are the Hardy-Sobolev inequality in its simplest form, Lax-Milgram Theorem and  a compactness argument in weighted spaces framework from [{\sc Bertsch-Rostamian}] \cite{BeRo}.

\section{Existence for the case $0<\alpha<1$}
We study the existence of positive weak solutions to the nonlinear singular problem:
\begin{eqnarray*}
({\rm P}_0)\left\{\begin{array}{ll}
&-\Delta u=\frac{1}{u^\alpha}\quad\mbox{in }\Omega\\
&u=0\quad\mbox{on }\partial\Omega
\end{array}
\right.
\end{eqnarray*}
where $\Omega$ is a smooth bounded domain in $\R^N$ and $0<\alpha<1$.

The problem $({\rm P}_0)$ is reduced to an equivalent fixed point problem which is studied by using a method of sub and supersolutions giving rise to monotone sequences converging to fixed points which are actually minimal and maximal solutions (which may coincide) in the interval between the ordered sub and supersolutions. In our case the choice of the functional space where to work is given by the boundary behavior of the purported solutions we suspect.
\begin{defi}\label{def2.1}
We say that $u_0$ (resp. $u^0$) is a subsolution (resp. a supersolution) of $({\rm P}_0)$ if $u_0$, $u^0$ belong to $H^1_0(\Omega)\cap L^\infty(\Omega)$ and if 
\begin{eqnarray}\label{sub-sup}
\int_{\Omega}\nabla u_0\nabla v-\int_{\Omega}(u_0^{-\alpha})v\leq 0\leq\int_{\Omega}\nabla u^0\nabla v-\int_{\Omega}(u^0)^{-\alpha}v\quad\mbox{for any }v\in H^1_0(\Omega),\; v\geq 0.
\end{eqnarray} 
\end{defi}
The main existence theorem we shall prove is the following:
\begin{theo}\label{theo2.1}
Assume that there exists a subsolution $u_0$ (resp. a supersolution $u^0$) such that $u_0\leq u^0$ and that there exist constants $c_1$, $c_2$ satisfying :
\begin{eqnarray*}
0<c_1 d(x)\leq u_0(x)\leq u^0(x)\leq c_2 d(x)\quad\mbox{in }\Omega.
\end{eqnarray*}
Then, there exists a minimal solution $\underline{u}$ (resp. a maximal solution $\overline{u}$) such that
\begin{equation*}
u_0\leq \underline{u}\leq\overline{u}\leq u^0.
\end{equation*}
\end{theo}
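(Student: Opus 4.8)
The plan is to recast $(\mathrm P_0)$ as a fixed point problem and to run a monotone iteration between $u_0$ and $u^0$. The naive scheme $-\Delta u_{n+1}=u_n^{-\alpha}$ is the obvious candidate, but since $s\mapsto s^{-\alpha}$ is decreasing the associated solution operator is order-reversing, so this iteration oscillates rather than increases, and one cannot even verify that the first iterate stays below $u^0$. To repair this I would absorb a large, but $x$-dependent, zeroth order term: fix a weight $\lambda(x)\eqdef \alpha\,(c_1 d(x))^{-\alpha-1}$ and set $g(x,s)\eqdef s^{-\alpha}+\lambda(x)s$. Because $c_1 d\le u_0$, on the interval $s\in[u_0(x),u^0(x)]$ one has $\partial_s g(x,s)=-\alpha s^{-\alpha-1}+\lambda(x)\ge 0$, so $g(x,\cdot)$ is nondecreasing precisely on the range of values the iterates may take. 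The iteration then becomes $L u_{n+1}=g(x,u_n)$, where $L\eqdef-\Delta+\lambda(x)$.

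The first block of work is to make $L^{-1}$ meaningful. I would work in the weighted Hilbert space $V\eqdef\{v\in H^1_0(\Omega):\int_\Omega \lambda v^2<\infty\}$ with $a(u,v)=\int_\Omega\nabla u\nabla v+\int_\Omega\lambda uv$; coercivity is immediate from $\lambda\ge0$, and boundedness of the linear functional $v\mapsto\int_\Omega g(x,u_n)v$ follows from the Hardy--Sobolev inequality for the $\int u_n^{-\alpha}v$ term (using $u_n\ge c_1 d$ and $\alpha<1$) and from Cauchy--Schwarz in the $\lambda$-weighted inner product for the $\int\lambda u_n v$ term, since $\int_\Omega\lambda (u^0)^2\sim\int d^{1-\alpha}<\infty$. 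Lax--Milgram then produces a unique $u_{n+1}\in V$ at each step. Note that $u_0,u^0\in V$ and $C_c^\infty(\Omega)\subset V$ (as $\lambda$ is locally bounded in the interior), which will be enough to recover the equation in the usual weak sense at the end.

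Next I would run the monotonicity induction. Since $\lambda\ge0$, $L$ satisfies the weak comparison principle. Starting from $u_0$, the subsolution inequality gives $Lu_0\le g(x,u_0)=Lu_1$, hence $u_0\le u_1$; monotonicity of $g(x,\cdot)$ then propagates $u_n\le u_{n+1}$, while the supersolution inequality $Lu^0\ge g(x,u^0)\ge g(x,u_n)=Lu_{n+1}$ keeps $u_{n+1}\le u^0$. Thus $u_0\le u_1\le\cdots\le u^0$, and symmetrically the iteration started at $u^0$ decreases and stays above $u_0$. Crucially every iterate is trapped in $c_1 d\le u_n\le c_2 d$, so $u_n^{-\alpha}$ is dominated by $(c_1 d)^{-\alpha}$. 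Passing to the limit, monotone convergence handles $\int u_n^{-\alpha}v$, while the compactness of the embedding of $V$ in the Bertsch--Rostamian weighted framework gives convergence of the gradients and of the $\lambda$-term; after cancelling the $\lambda$-terms on both sides one obtains $\int_\Omega\nabla \underline u\,\nabla v=\int_\Omega \underline u^{-\alpha}v$ first for $v\in C_c^\infty(\Omega)$ and then, since $\underline u^{-\alpha}\in H^{-1}(\Omega)$ by Hardy, for all $v\in H^1_0(\Omega)$. Minimality of $\underline u$ is automatic: any solution in $[u_0,u^0]$ is a supersolution lying above every iterate by the same comparison argument, so $\underline u\le u$; maximality of $\overline u$ is dual.

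The main obstacle is entirely the first, conceptual step: the decreasing nonlinearity destroys the order-preserving structure on which the sub/supersolution method relies, and no constant shift can restore it because $s^{-\alpha-1}$ blows up at the boundary. The real content is to observe that this degeneracy is matched exactly by the $d(x)$-behaviour of $u_0$, so that the singular weight $\lambda\sim d^{-\alpha-1}$ both monotonizes $g$ on the order interval and still yields an invertible, compact resolvent in the weighted space. Once the functional setting (coercivity of $a$, Hardy--Sobolev control of the singular source, Bertsch--Rostamian compactness) is in place, the monotone iteration and the limit passage are routine.
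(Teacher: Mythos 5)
Your proposal is correct and is essentially the paper's own argument: your monotonizing weight $\lambda(x)=\alpha\,(c_1 d(x))^{-1-\alpha}$ is exactly the paper's term $M d^{-1-\alpha}$ in $F(w)=w^{-\alpha}+Mw/d^{1+\alpha}$ (Lemma~\ref{lemme2.1}), and your Lax--Milgram/Hardy--Sobolev treatment of $L=-\Delta+\lambda$ in the weighted framework is the paper's Lemma~\ref{lemme2.2} together with the $L^2(\Omega,b)$ setting. The only difference is one of packaging: you run the monotone iteration by hand (as the paper itself does in Section~3 for $1<\alpha<3$), whereas the paper's Section~2 proof composes $T=i\circ P\circ F$, checks that $T$ is compact (via the Bertsch--Rostamian embedding, Lemma~\ref{lemme2.3}) and monotone with $T(K)\subset K$, and then cites Amann's theorem to produce the minimal and maximal fixed points.
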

In order to prove this theorem we define for the weight $b(x)\eqdef \frac{1}{d^{1+\alpha}(x)}$ the subset
\begin{equation*}
K\eqdef [u_0,u^0]=\left\{u\in L^2(\Omega,b)\, :\, u_0\leq u\leq u^0\right\}
\end{equation*}
where $L^2(\Omega,b)$ is the usual weighted Lebesgue space with weight $b(x)$. Notice that $K$ is convex, closed and bounded.

We reduce the original problem $({\rm P}_0)$ to an equivalent problem for a nonlinear operator associated to the solution operator of problem $({\rm P}_0)$. A first auxilary result is the following:
\begin{lemm}\label{lemme2.1}
There exists a positive constant $M>0$ such that the mapping $F\,:\, K\to H^{-1}(\Omega)$ defined by $F(w)=\frac{1}{w^\alpha}+M\frac{w}{d(x)^{1+\alpha}}$ for $M>0$ large enough is well-defined, continuous and monotone.
\end{lemm}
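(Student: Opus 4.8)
The plan is to prove three things about $F(w) = w^{-\alpha} + M w \, d(x)^{-(1+\alpha)}$: that it maps $K$ into $H^{-1}(\Omega)$, that it is continuous, and that it is monotone, with the constant $M$ chosen large enough at the appropriate step. The key structural fact I would exploit throughout is the two-sided bound $c_1 d(x) \leq u_0 \leq w \leq u^0 \leq c_2 d(x)$ valid for every $w \in K$, which controls the singular behavior of each term near $\partial\Omega$.

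First I would check that $F(w) \in H^{-1}(\Omega)$. Since $w \geq c_1 d(x)$, we have $w^{-\alpha} \leq c_1^{-\alpha} d(x)^{-\alpha}$ and $w \, d(x)^{-(1+\alpha)} \leq c_2 \, d(x)^{-\alpha}$, so both terms are dominated by a constant multiple of $d(x)^{-\alpha}$. To see that such a function defines an element of $H^{-1}(\Omega)$, I would pair $F(w)$ against an arbitrary $v \in H^1_0(\Omega)$ and estimate $\int_\Omega d(x)^{-\alpha} |v| \, \dx$ using the Hardy--Sobolev inequality: writing $d(x)^{-\alpha}|v| = d(x)^{-\alpha}|v| \cdot 1$ and using that $\int_\Omega \frac{v^2}{d(x)^2}\,\dx \leq C \|v\|_{H^1_0}^2$, together with $0 < \alpha < 1$ so that $d(x)^{-\alpha}$ is integrable and $d(x)^{1-\alpha}$ is bounded, I can bound $\int_\Omega d(x)^{-\alpha}|v|\,\dx$ by $C\|v\|_{H^1_0(\Omega)}$ via Cauchy--Schwarz, splitting $d(x)^{-\alpha} = d(x)^{-1} \cdot d(x)^{1-\alpha}$. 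This shows $F(w)$ is a bounded linear functional on $H^1_0(\Omega)$, i.e. lies in $H^{-1}(\Omega)$, with norm controlled uniformly over $K$.

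For monotonicity I want to show that $w_1 \leq w_2$ in $K$ implies $F(w_1) \leq F(w_2)$ in the sense of $H^{-1}$, i.e. $\langle F(w_2) - F(w_1), v\rangle \geq 0$ for $v \geq 0$. The term $w \mapsto w^{-\alpha}$ is decreasing, so it works against us, while $w \mapsto M w \, d(x)^{-(1+\alpha)}$ is increasing. The point is that the map $t \mapsto t^{-\alpha} + M t \, d^{-(1+\alpha)}$ is monotone increasing in $t$ on the relevant range provided $M$ is large: its derivative is $-\alpha t^{-\alpha-1} + M d^{-(1+\alpha)}$, and on $K$ we have $t \geq c_1 d$, hence $t^{-\alpha - 1} \leq c_1^{-\alpha-1} d^{-(1+\alpha)}$, so the derivative is nonnegative as soon as $M \geq \alpha c_1^{-\alpha-1}$. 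This pointwise monotonicity, integrated against $v \geq 0$, gives monotonicity of $F$, and fixes the required threshold on $M$.

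The main obstacle I anticipate is continuity, specifically continuity as a map into $H^{-1}(\Omega)$ rather than pointwise. Given $w_n \to w$ in $L^2(\Omega, b)$ with all iterates in $K$, I would pass to a subsequence converging a.e., use that $x \mapsto x^{-\alpha}$ is continuous to get $w_n^{-\alpha} \to w^{-\alpha}$ a.e., and then establish convergence of the duality pairings $\langle F(w_n), v\rangle \to \langle F(w), v\rangle$ uniformly for $v$ in the unit ball of $H^1_0(\Omega)$. The delicate part is the singular term near $\partial\Omega$: I would dominate $|w_n^{-\alpha} - w^{-\alpha}| \leq 2 c_1^{-\alpha} d(x)^{-\alpha}$ and $|w_n - w| \, d^{-(1+\alpha)} \leq 2 c_2 d(x)^{-\alpha}$, both in $L^2(\Omega, d^{-2})$-type spaces compatible with Hardy, and invoke dominated convergence together with the Hardy--Sobolev bound to pass the limit under the integral, exactly as in the $H^{-1}$ membership step. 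The full subsequence converges because the limit is independent of the chosen subsequence.
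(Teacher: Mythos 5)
Your proposal is correct, and on two of the three points it takes a genuinely different route from the paper's proof. The well-definedness step is essentially identical to the paper's (pointwise bounds $w^{-\alpha}\leq c_1^{-\alpha}d^{-\alpha}$ and $w\,d^{-(1+\alpha)}\leq c_2\,d^{-\alpha}$ from the definition of $K$, then the splitting $d^{-\alpha}=d^{-1}\cdot d^{1-\alpha}$, Cauchy--Schwarz and Hardy--Sobolev). For continuity, however, the paper does not use a soft limiting argument: it proves the stronger, quantitative statement that $F$ is Lipschitz on $K$, namely $\Vert F(w_n)-F(w)\Vert_{H^{-1}(\Omega)}\leq c\,\Vert w_n-w\Vert_{L^2(\Omega,b)}$, by expressing $w^{\alpha}-w_n^{\alpha}$ through the mean value theorem, using that the intermediate point satisfies $\theta\geq c_1 d$ and the elementary inequality $d^{-2\alpha}\leq c\,d^{-(1+\alpha)}$ (valid since $\alpha<1$ and $\Omega$ is bounded). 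Your route --- first reduce the $H^{-1}$ norm to a weighted $L^2$ norm by the same Cauchy--Schwarz/Hardy splitting (this reduction is precisely what makes the convergence uniform over the unit ball of $v$, so it deserves to be written out rather than left as ``exactly as in the membership step''), then a.e.\ convergence along a subsequence, domination by $2c_1^{-\alpha}d^{1-\alpha}\in L^2(\Omega)$, dominated convergence, and the subsequence principle --- is correct but softer: it yields continuity with no modulus, whereas the paper's argument yields Lipschitz continuity and requires no subsequence extraction; in exchange, your argument would survive verbatim if $u^{-\alpha}$ were replaced by a nonlinearity merely continuous in $u$ with the same growth. For monotonicity the comparison goes the other way: the paper only refers to \cite{HeMaVe}, while you give a self-contained proof, and your computation (derivative $-\alpha t^{-\alpha-1}+M d^{-(1+\alpha)}\geq 0$ for $t\geq c_1 d$ as soon as $M\geq\alpha c_1^{-(1+\alpha)}$, then integration against $v\geq 0$) correctly identifies that it is the lower bound $w\geq c_1 d$, rather than boundedness from above, that tames the singular term and fixes the threshold for $M$.
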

\proof Let $z\in H^1_0(\Omega)$. By using the Hardy-Sobolev inequality and the fact that $w\in K$, we get for the first term of $F(w)$ :
\begin{eqnarray*}
\displaystyle\left\vert <\frac{1}{w^\alpha},z>\right\vert=\left\vert\int_\Omega\frac{z}{w^\alpha}{\rm d}x\right\vert\leq c\int_{\Omega}\left\vert\frac{z}{w^\alpha}\right\vert d^{1-\alpha}{\rm d}x\leq c\left\Vert\frac{z}{d}\right\Vert_{L^2(\Omega)}\leq c\Vert z\Vert
\end{eqnarray*}
where $c$ denotes (as all along the paper) different positive constants which are independent of the functions involved. In the same vein, we denote by $\Vert u\Vert$ the norm $\left(\int_\Omega\vert\nabla u\vert^2{\rm d}x\right)^{\frac{1}{2}}$ in the Sobolev space $H^1_0(\Omega)$.

For the second term of $F(w)$ we have for any $z\in H^1_0(\Omega)$,
\begin{eqnarray*}
\displaystyle\left\vert<\frac{w}{d^{1+\alpha}},z>\right\vert=\left\vert\int_{\Omega}\frac{wz}{d^{1+\alpha}}{\rm d}x\right\vert\leq \int_{\Omega}\left\vert\frac{z}{d}\right\vert.\left\vert \frac{w}{d^\alpha}\right\vert{\rm d}x\leq c\Vert z\Vert
\end{eqnarray*}
where the constant $c>0$ is given by
\begin{eqnarray*}
\displaystyle\left\Vert\frac{w}{d^\alpha}\right\Vert_{L^2(\Omega)}=\left\vert\int_\Omega\frac{w^2}{d^{2\alpha}}{\rm d}x\right\vert^{\frac{1}{2}}=\left(\int_\Omega\frac{w^2}{d^{1+\alpha}}d^{1-\alpha}{\rm d}x\right)^{\frac{1}{2}}\leq c\Vert w\Vert_{L^2(\Omega,b)}.
\end{eqnarray*}
The existence of the constant $M>0$ such that $F$ is monotone increasing can be obtained by reasoning as in \cite{HeMaVe}. Notice that we only work in the bounded interval $[0,\max u^0]$. Next we prove the continuity of $F$. For the first term, if we assume that $w_n\to w$ in $L^2(\Omega,b)$, we should prove that 
\begin{eqnarray*}
\displaystyle\left\Vert \frac{1}{w_n^\alpha}-\frac{1}{w^\alpha}\right\Vert_{H^{-1}(\Omega)}\to 0\quad \mbox{ as } n\to\infty.
\end{eqnarray*}
We have
\begin{eqnarray*}
\displaystyle\left\vert\int_\Omega\left(\frac{1}{w_n^\alpha}-\frac{1}{w^\alpha}\right)z{\rm d}x\right\vert=\left\vert\int_\Omega\frac{w^\alpha-w_n^\alpha}{w_n^\alpha w^\alpha}\left(\frac{z}{d}\right)d{\rm d}x\right\vert\leq c'_n\left\Vert\frac{z}{d}\right\Vert_{L^2(\Omega)}\leq c'_n\Vert z\Vert
\end{eqnarray*}
where now we have by using the mean value theorem and the definition of $K$ that
\begin{eqnarray*}
&\displaystyle c_n'=\left\Vert \frac{d(w^\alpha-w_n^\alpha)}{w_n^\alpha w^\alpha}\right\Vert_{L^2(\Omega)}=\left(\int_\Omega\frac{\alpha^2 w(\theta)^{2(\alpha-1)}\vert w-w_n\vert^2 d^2}{\vert w_n\vert^{2\alpha}\vert w\vert^{2\alpha}}{\rm d}x\right)^{\frac{1}{2}}\leq\\
&\displaystyle c\left(\int_\Omega\frac{\vert w_n-w\vert^2 d^{2(\alpha-1)}d^2}{d^{4\alpha}}{\rm  d}x\right)^{\frac{1}{2}}\leq c\left(\int_\Omega\frac{\vert w-w_n\vert^2}{d^{2\alpha}}{\rm d}x\right)^{\frac{1}{2}}\leq c\left(\int_\Omega\frac{\vert w-w_n\vert^2}{d^{1+\alpha}}{\rm d}x\right)^{\frac{1}{2}}\leq\\
&\displaystyle c\Vert w-w_n\Vert_{L^2(\Omega,b)}
\end{eqnarray*}
which converges to $0$ as $n\to\infty$ (here $\theta$ denotes the intermediate point in the segment). For the second term in $F$, we have for any $z\in H^1_0(\Omega)$:
\begin{eqnarray*}
\displaystyle\left\vert <\frac{w-w_n}{d^{1+\alpha}},z>\right\vert\leq\int_\Omega\frac{\vert w-w_n\vert\vert z\vert}{d^{1+\alpha}}{\rm d}x=\int_\Omega\frac{\vert w-w_n\vert}{d^\alpha}\left\vert\frac{z}{d}\right\vert{\rm d}x.
\end{eqnarray*}
We have now
\begin{eqnarray*}
\displaystyle\int_\Omega\frac{\vert w-w_n\vert^2}{d^{2\alpha}}{\rm d}x=\int_\Omega\frac{\vert w-w_n\vert^2}{d^{1+\alpha}}d^{1-\alpha}{\rm d}x\leq c\Vert w-w_n\Vert^2_{L^2(\Omega,b)}
\end{eqnarray*}
from where we obtain
\begin{eqnarray*}
\displaystyle\left\vert<\frac{w-w_n}{d^{1+\alpha}},z>\right\vert\leq c\Vert w-w_n\Vert_{L^2(\Omega,b)}\Vert z\Vert
\end{eqnarray*}
giving the result.\qed

Problem $({\rm P}_0)$ is obviously equivalent to the nonlinear problem
\begin{eqnarray}
\label{eq2.3}
\left\{\begin{array}{ll}
&-\Delta u+\frac{Mu}{d(x)^{1+\alpha}}=\frac{1}{u^\alpha}+\frac{Mu}{d(x)^{1+\alpha}}\quad\mbox{in }\Omega,\\
&u=0\quad\mbox{on }\partial\Omega.
\end{array}\right.
\end{eqnarray}
Now we "factorize" conveniently the solution operator to \eqref{eq2.3}. With this aim, we prove first the following result
\begin{lemm}\label{lemme2.2}
If $0<\alpha<1$, for any $h\in H^{-1}(\Omega)$, there exists a unique solution $z\in H^1_0(\Omega)$ to the linear  problem
\begin{eqnarray}\label{eq2.4}
\left\{\begin{array}{ll}
&-\Delta z+\frac{M z}{d(x)^{1+\alpha}}=h\quad\mbox{in }\Omega,\\
&z=0\quad\mbox{on }\partial\Omega.
\end{array}\right.
\end{eqnarray}
Moreover, if $h\geq 0$ (in the sense that $\displaystyle<h,z>_{H^{-1},H^1_0}\geq 0$ for any $z\in H^1_0(\Omega)$ satisfying $z\geq 0$ a.e. in $\Omega$), then $z\geq 0$.
\end{lemm}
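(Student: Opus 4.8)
The plan is to recast the linear boundary value problem \eqref{eq2.4} as a coercive variational equation and then invoke the Lax--Milgram theorem, exactly in the spirit announced in the introduction. To this end I would introduce on $H^1_0(\Omega)$ the bilinear form
\[
a(z,v)=\int_\Omega\nabla z\cdot\nabla v\,\dx+M\int_\Omega\frac{zv}{d(x)^{1+\alpha}}\,\dx,\qquad z,v\in H^1_0(\Omega),
\]
so that a function $z\in H^1_0(\Omega)$ solves \eqref{eq2.4} weakly if and only if $a(z,v)=\langle h,v\rangle$ for all $v\in H^1_0(\Omega)$. The right-hand side $v\mapsto\langle h,v\rangle$ is a bounded linear functional on $H^1_0(\Omega)$ since $h\in H^{-1}(\Omega)$, so the whole issue reduces to verifying that $a$ is bounded and coercive.

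The key structural observation, and the place where the hypothesis $0<\alpha<1$ enters, is that one can write $\frac{1}{d^{1+\alpha}}=\frac{1}{d^2}\,d^{1-\alpha}$, where $d^{1-\alpha}$ is bounded on $\Omega$ because $\Omega$ is bounded and $1-\alpha>0$. Hence, for the weighted term,
\[
\left|\int_\Omega\frac{zv}{d^{1+\alpha}}\,\dx\right|\leq c\int_\Omega\frac{|z|}{d}\cdot\frac{|v|}{d}\,\dx\leq c\left\Vert\frac{z}{d}\right\Vert_{L^2(\Omega)}\left\Vert\frac{v}{d}\right\Vert_{L^2(\Omega)}\leq c\,\Vert z\Vert\,\Vert v\Vert,
\]
the last step being the Hardy--Sobolev inequality applied to both factors; combined with the obvious Cauchy--Schwarz bound on the gradient term this gives boundedness of $a$. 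Coercivity is then immediate and costless: since $M>0$ and the weighted integrand is nonnegative, $a(z,z)\geq\int_\Omega|\nabla z|^2\,\dx=\Vert z\Vert^2$. Lax--Milgram therefore yields a unique $z\in H^1_0(\Omega)$ solving \eqref{eq2.4}.

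For the sign statement, assume $h\geq 0$ and test the equation with $v=z^-\eqdef\max(-z,0)\in H^1_0(\Omega)$, which satisfies $z^-\geq 0$. On the one hand $a(z,z^-)=\langle h,z^-\rangle\geq 0$ by hypothesis. On the other hand, using $\nabla z^-=-\nabla z$ on $\{z<0\}$ and $zz^-=-(z^-)^2$ there, a direct computation gives $a(z,z^-)=-\int_\Omega|\nabla z^-|^2\,\dx-M\int_\Omega\frac{(z^-)^2}{d^{1+\alpha}}\,\dx=-a(z^-,z^-)\leq 0$. Comparing the two bounds forces $a(z^-,z^-)=0$, whence by coercivity $\Vert z^-\Vert=0$, so $z^-=0$ and $z\geq 0$ a.e.

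I expect no serious obstacle here: once the algebraic identity $d^{-(1+\alpha)}=d^{-2}d^{1-\alpha}$ is used, the whole argument is the standard Lax--Milgram plus truncation-testing scheme. The only point demanding genuine care is the boundedness of $a$, since the potential $M/d^{1+\alpha}$ is itself singular at $\partial\Omega$; it is precisely the restriction $\alpha<1$ (equivalently $1+\alpha<2$) that keeps this singularity weaker than $d^{-2}$ and hence controllable by Hardy's inequality. Coercivity and the maximum-principle step, by contrast, are robust and exploit only $M>0$ and the nonnegativity of the weight.
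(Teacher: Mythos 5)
Your proposal is correct and follows essentially the same route as the paper: the same bilinear form, boundedness via Hardy's inequality combined with the boundedness of $d^{1-\alpha}$ (which is where $\alpha<1$ enters), coercivity from $M>0$, and Lax--Milgram. The only difference is that you spell out the weak maximum principle by testing with $z^-$, a standard step the paper dispatches with ``the rest follows immediately,'' and your version of it is correct.
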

\proof
We apply Lax-Milgram theorem. Indeed, the associated bilinear form
\begin{eqnarray*}
\displaystyle a(u,v)=\int_\Omega\nabla u .\nabla v{\rm d}x +M\int_{\Omega}\frac{ u v}{d(x)^{1+\alpha}}{\rm d}x
\end{eqnarray*}
is well-defined, continuous and coercive in $H^1_0(\Omega)$. Using again Hardy-Sobolev inequality we get
\begin{eqnarray*}
\displaystyle\left\vert\int_\Omega\frac{uv}{d^{1+\alpha}}{\rm d}x\right\vert\leq\int_\Omega\left\vert\frac{u}{d}\right\vert .\left\vert\frac{v}{d}\right\vert d^{1-\alpha}{\rm d}x\leq c\left\Vert\frac{u}{d}\right\Vert_{L^2(\Omega)}\left\Vert\frac{v}{d}\right\Vert_{L^2(\Omega)}\leq c\Vert u\Vert .\Vert v\Vert
\end{eqnarray*}
which proves the continuity. The rest follows immediately.
\begin{coro}\label{corollaire2.1}
The linear operator $P\,:\, H^{-1}(\Omega)\to H^1_0(\Omega)$ defined by $z=Ph$ is continuous.
\end{coro}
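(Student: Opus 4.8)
The plan is to read off continuity directly from the Lax--Milgram setting already established in the proof of Lemma~\ref{lemme2.2}. First I would verify that $P$ is well-defined and linear. Well-definedness is exactly the existence-and-uniqueness statement of Lemma~\ref{lemme2.2}: to each $h\in H^{-1}(\Omega)$ there corresponds exactly one $z=Ph\in H^1_0(\Omega)$ solving \eqref{eq2.4} weakly, i.e. $a(z,v)=\langle h,v\rangle$ for all $v\in H^1_0(\Omega)$, where $a$ is the bilinear form of Lemma~\ref{lemme2.2}. Linearity of $P$ then follows from the bilinearity of $a$ together with uniqueness: if $z_i=Ph_i$ and $\lambda\in\R$, then $\lambda z_1+z_2$ solves the weak problem with datum $\lambda h_1+h_2$, so uniqueness forces $P(\lambda h_1+h_2)=\lambda Ph_1+Ph_2$.

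The core of the argument is the a priori bound obtained by testing the equation against the solution itself. Taking $v=z=Ph$ in the weak formulation gives $a(z,z)=\langle h,z\rangle$. Since $M>0$ and the weight $d^{-(1+\alpha)}$ is nonnegative, the coercivity already computed in Lemma~\ref{lemme2.2} yields
\begin{eqnarray*}
\|z\|^2\leq \int_\Omega|\nabla z|^2\,{\rm d}x+M\int_\Omega\frac{z^2}{d^{1+\alpha}}\,{\rm d}x=a(z,z)=\langle h,z\rangle\leq \|h\|_{H^{-1}(\Omega)}\,\|z\|.
\end{eqnarray*}
Dividing by $\|z\|$ (the case $z=0$ being trivial) gives $\|Ph\|=\|z\|\leq\|h\|_{H^{-1}(\Omega)}$, so $P$ is a bounded, hence continuous, linear operator, with operator norm at most $1$.

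There is essentially no hard step here: continuity is an immediate consequence of coercivity, which was already the decisive ingredient of Lemma~\ref{lemme2.2}. The only point deserving a word of care is the linearity of $P$, which is not completely automatic since $P$ is defined only implicitly through the solution map; but it is forced by the uniqueness part of Lemma~\ref{lemme2.2}. In fact the same testing argument shows more, namely that $P$ is a contraction into $H^1_0(\Omega)$, a fact that will presumably be convenient when iterating the fixed-point scheme for the nonlinear operator $F$.
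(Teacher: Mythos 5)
Your proof is correct and is essentially the argument the paper intends: the corollary is stated without proof precisely because continuity of the solution operator is the standard a priori estimate built into the Lax--Milgram framework of Lemma~\ref{lemme2.2}, namely testing with $z=Ph$ and using coercivity (here with constant $1$, since the weighted term is nonnegative) to get $\|Ph\|\leq\|h\|_{H^{-1}(\Omega)}$. Your additional remarks on linearity via uniqueness are a harmless elaboration of the same route.
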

It is easy to see that solving \eqref{eq2.3} is equivalent to find fixed points of the nonlinear operator $T=i\circ P\circ F\, :\,K\to L^2(\Omega,b)$, where $i:H^1_0(\Omega)\to L^2(\Omega,b)$ is the usual Sobolev imbedding. We need a final auxiliary result from \cite{BeRo}.
\begin{lemm}[\cite{BeRo}]\label{lemme2.3}
The imbedding $H^1_0(\Omega)\to L^2(\Omega,c)$ where $c(x)=\frac{1}{d(x)^\beta}$ is compact for $\beta<2$.
\end{lemm}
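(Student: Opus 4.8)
The plan is to deduce compactness from a splitting of $\Om$ into a thin boundary layer and its complement, controlling the singular part with the Hardy--Sobolev inequality. First I would record that the embedding is at least \emph{bounded}: since $0\le\beta<2$ we have $d^{2-\beta}\le(\mathrm{diam}\,\Om)^{2-\beta}$ on $\Om$, so writing $d^{-\beta}=d^{-2}d^{2-\beta}$ and applying the Hardy--Sobolev inequality $\int_\Om u^2/d^2\,\dx\le c\,\Vert u\Vert^2$ already used above yields $\int_\Om u^2/d^\beta\,\dx\le c\,\Vert u\Vert^2$ for every $u\in H^1_0(\Om)$. When $\beta\le 0$ the weight $d^{-\beta}$ is bounded and the statement reduces at once to the classical Rellich--Kondrachov theorem, so I would treat only the genuinely singular case $0<\beta<2$.

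For compactness, I would take a bounded sequence $(u_n)$ in $H^1_0(\Om)$. By reflexivity together with Rellich--Kondrachov I would pass to a subsequence with $u_n\rightharpoonup u$ in $H^1_0(\Om)$ and $u_n\to u$ strongly in $L^2(\Om)$; set $v_n=u_n-u$, so that $(v_n)$ is bounded in $H^1_0(\Om)$ while $\Vert v_n\Vert_{L^2(\Om)}\to 0$. The goal is to show $\int_\Om v_n^2/d^\beta\,\dx\to 0$. Fix $\dl>0$ and split this integral over the boundary layer $\Om_\dl=\{x\in\Om:\ d(x)<\dl\}$ and over $\{d\ge\dl\}$.

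On the boundary layer I would estimate, exactly as above, $\int_{\Om_\dl}v_n^2/d^\beta\,\dx\le\dl^{\,2-\beta}\int_\Om v_n^2/d^2\,\dx\le c\,\dl^{\,2-\beta}\Vert v_n\Vert^2\le C\,\dl^{\,2-\beta}$, where $C=c\sup_n\Vert v_n\Vert^2<\infty$; since $2-\beta>0$, this is small \emph{uniformly in $n$} as soon as $\dl$ is small. On the complement the weight is harmless, $d^{-\beta}\le\dl^{-\beta}$, so $\int_{\{d\ge\dl\}}v_n^2/d^\beta\,\dx\le\dl^{-\beta}\Vert v_n\Vert_{L^2(\Om)}^2\to 0$ as $n\to\infty$ for each fixed $\dl$.

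Combining the two pieces with the usual order of quantifiers finishes the proof: given $\vep>0$, first choose $\dl$ so that $C\dl^{\,2-\beta}<\vep/2$, then choose $N$ so that $\dl^{-\beta}\Vert v_n\Vert_{L^2(\Om)}^2<\vep/2$ for $n\ge N$; hence $\int_\Om v_n^2/d^\beta\,\dx<\vep$ for $n\ge N$, i.e.\ $u_n\to u$ in $L^2(\Om,c)$. The one delicate point, and the heart of the argument, is the boundary-layer estimate: it is precisely the strict inequality $\beta<2$ that produces the positive power $\dl^{\,2-\beta}$, and Hardy's inequality that converts this into a bound uniform in $n$. Without either ingredient the near-boundary contribution could not be made small independently of $n$, and the interior/boundary decoupling would fail.
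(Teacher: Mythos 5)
Your proof is correct, but there is nothing in the paper to compare it against line by line: the paper does not prove this lemma, it imports it wholesale from \cite{BeRo}. Your argument is the standard self-contained one, and it is worth spelling out what it buys. You extract, via reflexivity and Rellich--Kondrachov, a subsequence with $u_n\rightharpoonup u$ in $H^1_0(\Omega)$ and $u_n\to u$ in $L^2(\Omega)$, set $v_n=u_n-u$, and split the weighted integral into the boundary layer $\{d<\delta\}$ and its complement. The layer is controlled \emph{uniformly in $n$} by writing $d^{-\beta}=d^{-2}d^{2-\beta}\leq \delta^{2-\beta}d^{-2}$ and invoking the Hardy--Sobolev inequality for $v_n\in H^1_0(\Omega)$, which is legitimate since $\sup_n\Vert v_n\Vert_{H^1_0(\Omega)}<\infty$; the complement is controlled by $d^{-\beta}\leq\delta^{-\beta}$ and the strong $L^2$ convergence. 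The order of quantifiers (first $\delta$, then $N$) is handled correctly, the strict inequality $\beta<2$ is used exactly where it must be (to get the positive power $\delta^{2-\beta}$), and your observation that $\beta\leq 0$ reduces to Rellich--Kondrachov covers the full stated range. Notably, your proof uses only the two tools the paper already keeps on the table --- the Hardy--Sobolev inequality in its simplest form and the classical compact embedding $H^1_0(\Omega)\hookrightarrow L^2(\Omega)$ --- whereas \cite{BeRo} establishes compactness results in a more general weighted-Sobolev framework built for degenerate diffusion equations. Given that the authors' declared aim is to avoid heavy machinery, your half-page argument is exactly the kind of proof that would make this step of the paper self-contained.
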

\proof (of Theorem \ref{theo2.1}) The method of sub and supersolutions can be applied  since it can be shown by the usual comparison arguments that $T(K)\subset K$ with $T$ compact and monotone (in the sense that $u\leq v$ implies that $Tu \leq Tv$) and the method (see e.g., [{\sc Amann}] \cite{Am}) gives the existence of a minimal (resp. maximal) solution $\underline{u}$) (resp. $\overline{u}$) such that $u_0\leq \underline{u}\leq \overline{u}\leq u^0$.

Finally we exhibit ordered sub and super solutions satisfying the conditions in Theorem \ref{theo2.1}. As a subsolution, we try $u_0= c\phi_1$ where $-\Delta \phi_1=\lambda_1\phi_1$ in $\Omega$, $\phi=0$ on $\partial\Omega$, $\phi_1>0$, $c>0$. We have 
\begin{eqnarray*}
-\Delta u_0-\frac{1}{u_0^\alpha}=c\lambda_1\phi_1-\frac{1}{c^\alpha\phi_1^\alpha}=\frac{c^{1+\alpha}\lambda_1\phi_1^{1+\alpha}-1}{c^\alpha\phi_1^\alpha}\leq 0
\end{eqnarray*}
for $c>0$ small. As a supersolution, we pick $u^0=C\psi$, where $\psi>0$ is the unique solution to
\begin{eqnarray*}
-\Delta \psi=\frac{1}{d(x)^\alpha}\quad\mbox{in }\Omega, \quad\psi=0\quad\mbox{on }\partial\Omega.
\end{eqnarray*}
Then, we get by using that $\psi\sim d(x)$
\begin{eqnarray*}
-\Delta u^0-\frac{1}{(u^0)^\alpha}=\frac{C}{d^\alpha}-\frac{1}{(C\psi)^\alpha}=\frac{C^{\alpha+1}\psi^\alpha-c\psi^\alpha}{(C\psi)^\alpha d^\alpha}\geq 0
\end{eqnarray*}
for $C >0$ large.
\begin{rema}
Since our main goal in this paper is to show how to get existence proofs in this framework without using approximation arguments and avoiding classical linear theory, we limit ourselves to the model nonlinearity $u^{-\alpha}$; the interested reader may check that the same arguments work, with slight changes, for more general nonlinearities $f(x,u)$ "behaving like" $u^{-\alpha}$ with $0<\alpha<1$, in particular, e.g. $f(x,u)=\frac{1}{u^\alpha d(x)^\beta}$ with $\alpha+\beta<1$ and for self-adjoint uniformly elliptic differential operators.
\end{rema}

Uniqueness of the positive classical solution to $({\rm P}_0)$ was proved in \cite{CrRaTa} by using the maximum principle.  A more general uniqueness theorem which is closely related with linearized stability, was given in \cite{HeMaVe2} (see also \cite{GhRa}, \cite{HeMa} and \cite{HeMaVe}). Here we provide a very simple uniqueness proof for the solution obtained in Theorem \ref{theo2.1}. 

\begin{theo}\label{theo2.7}
Under the assumptions in Theorem \ref{theo2.1}, if $u$, $v$ are two solutions to $({\rm P}_0)$ such that $u_0\leq u,v\leq u^0$, then $u\equiv\, v$.
\end{theo}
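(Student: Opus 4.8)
The plan is to exploit the fact that the nonlinearity $s\mapsto s^{-\alpha}$ is \emph{strictly decreasing} on $(0,\infty)$, which makes a one-line monotonicity test the natural route to uniqueness; no approximation or convexity trick is needed. Both $u$ and $v$ are weak solutions lying in $H^1_0(\Omega)$ and, by the hypothesis inherited from Theorem \ref{theo2.1}, they are trapped in the interval $u_0\le u,v\le u^0$ with $c_1 d(x)\le u,v\le c_2 d(x)$. Consequently $u-v\in H^1_0(\Omega)$, and I would use this difference itself as a test function.

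Concretely, I would write the weak formulations $\int_\Omega \nabla u\cdot\nabla\phi = \langle u^{-\alpha},\phi\rangle$ and $\int_\Omega \nabla v\cdot\nabla\phi = \langle v^{-\alpha},\phi\rangle$, valid for every $\phi\in H^1_0(\Omega)$ since $u^{-\alpha},v^{-\alpha}\in H^{-1}(\Omega)$ by the estimate already carried out in the proof of Lemma \ref{lemme2.1}. Choosing $\phi=u-v$ in both identities and subtracting gives
\begin{equation*}
\|u-v\|^2=\int_\Omega|\nabla(u-v)|^2\,\mathrm{d}x=\langle u^{-\alpha}-v^{-\alpha},\,u-v\rangle.
\end{equation*}
Because $s\mapsto s^{-\alpha}$ is decreasing, the integrand $(u^{-\alpha}-v^{-\alpha})(u-v)$ is pointwise nonpositive, so the right-hand side is $\le 0$. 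Since the left-hand side is $\ge 0$, both must vanish, forcing $\nabla(u-v)=0$ a.e.\ and hence $u\equiv v$ by the Poincar\'e inequality.

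The only point requiring care, and the step I would treat as the genuine (if modest) obstacle, is justifying that the singular duality pairing on the right is finite and coincides with the honest integral $\int_\Omega(u^{-\alpha}-v^{-\alpha})(u-v)\,\mathrm{d}x$. Here the two-sided bound $c_1 d\le u,v\le c_2 d$ is decisive: by the mean value theorem one has $|u^{-\alpha}-v^{-\alpha}|\le \alpha\,(c_1 d)^{-\alpha-1}|u-v|$, so that $|(u^{-\alpha}-v^{-\alpha})(u-v)|\le c\, d^{-\alpha-1}(u-v)^2 = c\, d^{1-\alpha}\big(\tfrac{u-v}{d}\big)^2$. Since $u-v\in H^1_0(\Omega)$, the Hardy--Sobolev inequality gives $\tfrac{u-v}{d}\in L^2(\Omega)$, and as $1-\alpha>0$ the weight $d^{1-\alpha}$ is bounded; thus the integrand is absolutely integrable and the pairing splits exactly as claimed. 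With this justification in place the sign argument above closes the proof. (Should one prefer an approach robust to larger $\alpha$, the same conclusion follows from a D\'iaz--Sa\'a type computation testing with $\tfrac{u^2-v^2}{u}$ and $\tfrac{v^2-u^2}{v}$ and using that $s\mapsto s^{-\alpha-1}$ is decreasing, but in the present range $0<\alpha<1$ the direct test with $u-v$ is the simplest.)
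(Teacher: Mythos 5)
Your proof is correct, and it takes a genuinely different route from the paper's. You subtract the two weak formulations tested against $\phi=u-v$ and invoke the strict monotonicity of $s\mapsto s^{-\alpha}$ to obtain
\begin{equation*}
\int_\Omega|\nabla(u-v)|^2\,\mathrm{d}x=\int_\Omega\left(\frac{1}{u^\alpha}-\frac{1}{v^\alpha}\right)(u-v)\,\mathrm{d}x\leq 0,
\end{equation*}
and your justification that the duality pairing splits into this absolutely convergent integral (mean value theorem, the two-sided barriers $c_1d\leq u,v\leq c_2d$, Hardy--Sobolev, and boundedness of $d^{1-\alpha}$ for $\alpha<1$) is exactly the type of estimate already carried out in Lemma \ref{lemme2.1}, so nothing is missing. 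The paper argues differently: it cross-tests, multiplying the equation for $u$ by $v$ and the equation for $v$ by $u$, which gives $\int_\Omega\nabla u\cdot\nabla v\,\mathrm{d}x=\int_\Omega vu^{-\alpha}\,\mathrm{d}x=\int_\Omega uv^{-\alpha}\,\mathrm{d}x$ and hence $\int_\Omega (v^{\alpha+1}-u^{\alpha+1})/(u^\alpha v^\alpha)\,\mathrm{d}x=0$; this forces $u\equiv v$ only when the solutions are ordered ($u\leq v$), and the unordered case is then reduced to the ordered one by invoking the minimal solution $\underline{u}$ of Theorem \ref{theo2.1}, since $\underline{u}\leq u$ and $\underline{u}\leq v$ yield $\underline{u}=u=v$. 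Your route buys directness: arbitrary pairs of solutions are handled in one stroke, no minimal solution (hence no part of the sub/supersolution iteration beyond the barriers themselves) is used, and the argument transfers verbatim to any decreasing nonlinearity; it even covers the range $1<\alpha<3$ of Section~3, where the pointwise bound becomes $|(u^{-\alpha}-v^{-\alpha})(u-v)|\leq c\,d^{-2}(u-v)^2$ and Hardy's inequality again applies, so your parenthetical D\'iaz--Sa\'a fallback is not actually needed there. What the paper's route buys is a lighter integrability check (only $\int_\Omega vu^{-\alpha}\,\mathrm{d}x\leq c\int_\Omega d^{1-\alpha}\,\mathrm{d}x<\infty$, using just the pointwise barriers and neither the mean value theorem nor Hardy), at the cost of the extra two-step reduction through $\underline{u}$.
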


\proof  First, we assume that $u\leq v$ in $\Omega$. Multiplying $({\rm P}_0)$ for $u$ by $v$, $({\rm P}_0)$ for $v$ by $u$ and integrating by parts on $\Omega$ with Green's formula we obtain
\begin{eqnarray*}
\displaystyle\int_{\Omega}\nabla u\cdot\nabla v{\rm d}x=\int_\Omega\frac{v}{u^\alpha}{\rm d}x=\int_\Omega\frac{u}{v^\alpha}{\rm d}x
\end{eqnarray*}
and then 
\begin{eqnarray*}
\displaystyle\int_\Omega\left(\frac{v}{u^\alpha}-\frac{u}{v^\alpha}\right){\rm d}x=\int_\Omega\frac{v^{\alpha+1}-u^{\alpha+1}}{u^\alpha v^\alpha}{\rm d}x=0.
\end{eqnarray*}
Since $v\geq u$, $u\equiv v$. Notice that all the above integrals are meaningful. Indeed, since $u,v\in K$ we have, e.g., that $\int_\Omega\frac{v}{u^\alpha}{\rm d}x\leq c\int_\Omega d(x)^{1-\alpha}{\rm d}x<\infty$.

If now $u\not\leq v$ and $v\not\leq u$, we have $u_0\leq u, v\leq u^0$. Then, $\underline{u}\leq u$, $\underline{u}\leq v$ and it follows from above that $\underline{u}=u=v$.\qed

Since this unique solution is obtained by the method of sub and supersolutions it seems natural to think that is (at least linearly) asymptotically stable. This was proved in a much more general context in \cite{HeMaVe} for solutions $u>0$ in $\Omega$ with $\frac{\partial u}{\partial n}<0$ on $\partial\Omega$ working in the space $C^1_0(\overline{\Omega})$. On the other side, the results in \cite{BeRo}, proved  working in Sobolev  spaces , are not applicable to the linearized problem we obtain for the solution $u$ above, which is actually
\begin{eqnarray}\label{eq2.5}
\left\{\begin{array}{ll}
&-\Delta w+ \alpha\frac{w}{u^{1+\alpha}}=\mu w\quad\mbox{in }\Omega,\\
&w=0\quad\mbox{on }\partial\Omega.
\end{array}\right.
\end{eqnarray}
But it is easy to give a direct proof. For this, it is clear that if such  a first eigenvalue exists in some sense, then $\mu_1>0$. It is not difficult to show the existence of an infinite sequence of eigenvalues to \eqref{eq2.5} working in $L^2(\Omega)$. Indeed, for any $z\in L^2(\Omega)$, it follows from Lemma \ref{lemme2.2} the existence  of a unique  solution to the equation \eqref{eq2.4} and it turns out that $T=i\circ P$ is a self-adjoint compact linear operator in $L^2(\Omega)$ and the classical theory gives the existence of our sequence of eigenvalues with the usual variational characterization. That $\mu_1$ is simple and has an eigenfunction $\phi_1>0$ in $\Omega$ is obtained using that, by the weak (or Stampacchia's maximum principle), $P$ is irreductible and if $z\geq 0$, $Pz>0$ and it is possible to apply the version of Krein-Rutman Theorem in the form given in [{\sc Daners-Koch-Medina}] \cite{DaKoMe} weakening the strong positivity condition for $T$ by this one (much more general results in this direction can be found in [{\sc Diaz-Hernandez-Maagli}] \cite{DiHeMa} extending most of the work in \cite{BeRo}). We have then proved:
\begin{theo}\label{theo2.8}
Problem $({\rm P}_0)$ has a unique positive solution $u_0\leq u\leq u^0$ which is linearly asymptotically stable.
\end{theo}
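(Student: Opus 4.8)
The plan is to assemble this statement from the pieces already established and then carry out only the stability analysis, which is the single genuinely new assertion. Existence of a positive solution $u$ with $u_0\le u\le u^0$ is exactly Theorem \ref{theo2.1}, once the explicit ordered pair $(u_0,u^0)=(c\phi_1,C\psi)$ constructed there is in hand; uniqueness inside the interval $[u_0,u^0]$ is Theorem \ref{theo2.7}. So I would devote the proof to linear asymptotic stability, adopting the standard criterion: the solution is linearly asymptotically stable as soon as the principal eigenvalue $\mu_1$ of the linearized problem \eqref{eq2.5} is strictly positive. The work is therefore to give rigorous meaning to the eigenvalue problem \eqref{eq2.5}, produce its principal eigenvalue, and check $\mu_1>0$.

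For the functional setup I would work in $L^2(\Omega)$. Writing $L=-\Delta+\alpha\,u^{-(1+\alpha)}$, I note that since $u\sim d(x)$ the potential $\alpha\,u^{-(1+\alpha)}$ is comparable to $\alpha\, d(x)^{-(1+\alpha)}$, i.e. it is precisely of the singular type controlled by the bilinear form of Lemma \ref{lemme2.2}. The same Hardy-Sobolev estimate used there shows that $w\mapsto\int_\Omega|\nabla w|^2+\alpha\int_\Omega w^2 u^{-(1+\alpha)}$ is continuous and coercive on $H^1_0(\Omega)$, so by Lax-Milgram $L$ is invertible with bounded inverse $L^{-1}:L^2(\Omega)\to H^1_0(\Omega)$ inheriting the positivity statement of Lemma \ref{lemme2.2}. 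Composing with the compact embedding $i:H^1_0(\Omega)\to L^2(\Omega)$ (Lemma \ref{lemme2.3} with $\beta=0$), the operator $T=i\circ L^{-1}$ is compact and self-adjoint on $L^2(\Omega)$. The spectral theorem then yields a decreasing sequence of positive eigenvalues of $T$ accumulating at $0$, whose reciprocals are the eigenvalues $\mu_n\to+\infty$ of \eqref{eq2.5}, with the Rayleigh characterization $\mu_1=\min\{\int_\Omega|\nabla w|^2+\alpha\int_\Omega w^2u^{-(1+\alpha)}:w\in H^1_0(\Omega),\ \int_\Omega w^2=1\}$.

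Strict positivity of $\mu_1$ is then immediate: both terms in the numerator are nonnegative and $\int_\Omega|\nabla w|^2\ge\lambda_1\int_\Omega w^2$ by the Poincar\'e inequality, so $\mu_1\ge\lambda_1>0$. It remains to show that $\mu_1$ is simple with a one-signed eigenfunction $\phi_1>0$, and this is where I expect the main difficulty. Because the potential is singular on $\partial\Omega$, the operator $T$ need not be strongly positive in the classical $C^1_0(\overline{\Omega})$ sense, so I would not invoke the usual Krein-Rutman theorem; instead I would verify that $T$ is positive and irreducible — $h\ge0$, $h\ne0$ forces $Th>0$ a.e. in $\Omega$ by the weak (Stampacchia) maximum principle applied to \eqref{eq2.4} — and then apply the version of Krein-Rutman in \cite{DaKoMe}, which replaces strong positivity by irreducibility, to conclude that the spectral radius of $T$ is a simple eigenvalue with positive eigenfunction. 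This gives simplicity of $\mu_1$ together with $\phi_1>0$, whence linear asymptotic stability. The delicate point throughout is controlling the boundary singularity $d(x)^{-(1+\alpha)}$: one must keep the form coercive up to the boundary via Hardy-Sobolev and confirm that the maximum-principle and irreducibility hypotheses of \cite{DaKoMe} genuinely hold in this weighted-space framework, rather than in the classical smooth setting of \cite{HeMaVe}.
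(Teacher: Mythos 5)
Your proposal is correct and follows essentially the same route as the paper: reduce existence and uniqueness to Theorems \ref{theo2.1} and \ref{theo2.7}, then realize the linearized problem \eqref{eq2.5} in $L^2(\Omega)$ via a Lax--Milgram solution operator (the potential $\alpha u^{-(1+\alpha)}\sim \alpha d^{-(1+\alpha)}$ being handled exactly as in Lemma \ref{lemme2.2}), compose with the compact embedding to get a compact self-adjoint operator, and obtain simplicity of $\mu_1$ and a positive eigenfunction from the irreducibility/weak-maximum-principle version of Krein--Rutman in \cite{DaKoMe}. If anything, you are slightly more explicit than the paper (the Rayleigh-quotient bound $\mu_1\geq\lambda_1>0$ and the direct treatment of the operator $-\Delta+\alpha u^{-(1+\alpha)}$ rather than the operator of \eqref{eq2.4}), but the underlying argument is the same.
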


\begin{rema}
Linearized stability in the framework of classical solutions for much more general problems was proved in \cite{HeMaVe} working in the space $C^1_0(\overline{\Omega})$. The results in \cite{BeRo}, obtained working in weighted Sobolev spaces are not applicable here. Moreover, it is proved in \cite{HeMaVe} that linearized stability implies asymptotic stability in the sense of Lyapunov.
\end{rema}
\section{Existence in the case $1<\alpha<3$.}
We study now the same problem $({\rm P}_0)$ but for $1<\alpha<3$. If we try to apply the arguments in the preceding section, we will find some difficulties due to the fact that the embedding in Lemma \ref{lemme2.3} is not compact any more for $\beta=2$, which is precisely the critical exponent arising for $\alpha>1$.

Now we replace the assumption on the sub and supersolutions in Theorem \ref{theo2.1}  by the following:
\begin{equation}\label{eq3.1}
0<c_1 d(x)^{\frac{2}{1+\alpha}}\leq u_0\leq u^0\leq c_2 d(x)^{\frac{2}{1+\alpha}}
\end{equation}
and we define, this time for $b(x)=\frac{1}{d(x)^2}$ the set
\begin{eqnarray*}
\displaystyle K\eqdef[u_0,u^0]=\left\{u\in L^2(\Omega,b)\, :\, u_0\leq u\leq u^0\right\}.
\end{eqnarray*}
\begin{lemm}\label{lemme3.1}
There exists a constant $M>0$ such that the mapping $G: K\to H^{-1}(\Omega)$ defined by $G(w)=\frac{1}{w^\alpha} +\frac{Mw}{d(x)^2}$ is well-defined, continuous and monotone.
\end{lemm}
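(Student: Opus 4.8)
The plan is to follow the blueprint of Lemma \ref{lemme2.1} almost verbatim, adjusting only the exponents dictated by the new weight $b(x)=d(x)^{-2}$ and the new boundary profile $u_0,u^0\sim d(x)^{2/(1+\alpha)}$ recorded in \eqref{eq3.1}. I would split the argument into the three assertions -- well-definedness, monotonicity, continuity -- and in each case reduce the estimate to the Hardy--Sobolev inequality $\left\Vert z/d\right\Vert_{L^2(\Omega)}\leq c\Vert z\Vert$ after factoring a copy of $z/d$ out of the pairing.

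For well-definedness I would treat the two summands of $G(w)$ separately. For the singular term, the lower bound $w\geq c_1 d^{2/(1+\alpha)}$ gives $w^{-\alpha}\leq c\,d^{-2\alpha/(1+\alpha)}$, so that for $z\in H^1_0(\Omega)$,
\[
\left\vert\int_\Omega\frac{z}{w^\alpha}\,dx\right\vert\leq c\int_\Omega\left\vert\frac{z}{d}\right\vert d^{\,(1-\alpha)/(1+\alpha)}\,dx\leq c\left\Vert\frac{z}{d}\right\Vert_{L^2(\Omega)}\left(\int_\Omega d^{\,2(1-\alpha)/(1+\alpha)}\,dx\right)^{1/2},
\]
where I used $1-2\alpha/(1+\alpha)=(1-\alpha)/(1+\alpha)$. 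The last integral converges precisely when $2(1-\alpha)/(1+\alpha)>-1$, i.e. when $\alpha<3$; this is the one place where the hypothesis $1<\alpha<3$ is genuinely used, and it is the crux of the section. For the regularizing term I would use Cauchy--Schwarz with the split $wz/d^2=(z/d)(w/d)$, giving $\vert\langle w/d^2,z\rangle\vert\leq\left\Vert z/d\right\Vert_{L^2}\left\Vert w/d\right\Vert_{L^2}$, and note that $\left\Vert w/d\right\Vert_{L^2}^2=\Vert w\Vert_{L^2(\Omega,b)}^2<\infty$ exactly because $b=d^{-2}$.

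For monotonicity I would argue pointwise: fix $x$ and set $g_x(t)=t^{-\alpha}+Mt/d(x)^2$, so that $g_x'(t)=-\alpha t^{-\alpha-1}+M/d(x)^2$. On the admissible range $t\geq c_1 d(x)^{2/(1+\alpha)}$ the exponents conspire, since $(\alpha+1)\cdot\frac{2}{1+\alpha}=2$ forces $t^{-\alpha-1}\leq c_1^{-\alpha-1}d(x)^{-2}$; hence $g_x'(t)\geq(M-\alpha c_1^{-\alpha-1})/d(x)^2\geq0$ as soon as $M\geq\alpha c_1^{-\alpha-1}$, a threshold \emph{independent of} $x$. Thus $w_1\leq w_2$ forces $G(w_1)\leq G(w_2)$ pointwise, hence in the dual sense tested against $z\geq0$. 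For continuity, assuming $w_n\to w$ in $L^2(\Omega,b)$, the regularizing term is immediate from the same Cauchy--Schwarz estimate applied to $w-w_n$; for the singular term I would insert the mean value theorem, $w^\alpha-w_n^\alpha=\alpha\,\theta^{\alpha-1}(w-w_n)$ with $\theta$ between $w_n$ and $w$, and check that the weight $\theta^{2(\alpha-1)}d^2/(w_n^{2\alpha}w^{2\alpha})$ collapses to $\leq c\,d^{-2}$ (the exponent arithmetic again yields $-2$), so that the bound reduces to $c\Vert w-w_n\Vert_{L^2(\Omega,b)}\Vert z\Vert\to0$.

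The genuine obstacle is not any single inequality but the bookkeeping that makes every exponent land on the target power of $d$: the singular term must pair against $z/d$ with a \emph{residual integrable} weight, which is exactly what forces $\alpha<3$, while the monotonicity threshold $\alpha c_1^{-\alpha-1}d^{-2}$ must match the weight $d^{-2}$ so that $M$ is uniform in $x$. Both hinge on the single fact that the profile exponent $\frac{2}{1+\alpha}$ satisfies $(\alpha+1)\cdot\frac{2}{1+\alpha}=2$, i.e. that $d^{2/(1+\alpha)}$ is precisely the boundary behaviour for which $d^{-2}$ is the correct weight; once this is internalized, the three verifications are routine.
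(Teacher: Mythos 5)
Your proposal is correct and follows essentially the same route as the paper's proof: Hardy--Sobolev plus Cauchy--Schwarz for well-definedness (with the integrability of $d^{2(1-\alpha)/(1+\alpha)}$, i.e. the condition $\alpha<3$, as the key point), the split $\frac{wz}{d^2}=\frac{z}{d}\cdot\frac{w}{d}$ for the regularizing term, and the mean value theorem with the exponent arithmetic collapsing the weight to $d^{-2}$ for continuity. The only difference is that you prove monotonicity explicitly via the pointwise derivative bound with threshold $M\geq\alpha c_1^{-1-\alpha}$ (uniform in $x$ because $(\alpha+1)\cdot\tfrac{2}{1+\alpha}=2$), whereas the paper defers this point to the argument of Lemma \ref{lemme2.1}, which in turn cites \cite{HeMaVe}; your version is self-contained and correct.
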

\proof
For the first term in $G$, we have for any $z\in H^1_0(\Omega)$ by using Hardy-Sobolev inequality
\begin{eqnarray*}
\displaystyle\left\vert<\frac{1}{w^\alpha}, z>\right\vert=\left\vert\int_{\Omega}\frac{z}{w^\alpha}{\rm d}x\right\vert=\int_{\Omega}\left\vert\frac{z}{d}\right\vert\frac{d}{w^\alpha}{\rm d}x\leq c\left\Vert\frac{z}{d}\right\Vert_{L^2(\Omega)}\int_{\Omega}d^{1-\frac{2\alpha}{1+\alpha}}{\rm d}x\leq C\Vert z\Vert
\end{eqnarray*}
since
\begin{eqnarray*}
\displaystyle\left\Vert d^{1-\frac{2\alpha}{1+\alpha}}\right\Vert_{L^2(\Omega)}=\int_{\Omega}d^{\frac{2(1-\alpha)}{1+\alpha}}{\rm d}x<+\infty\quad \mbox{(we have in fact $\frac{2(1-\alpha)}{1+\alpha}+1=\frac{3-\alpha}{1+\alpha}>0$).}
\end{eqnarray*}
For the second term of $G$, we obtain for any $z\in H^1_0(\Omega)$
\begin{eqnarray*}
\displaystyle\left\vert<\frac{w}{d^2},z>\right\vert=\left\vert\int_{\Omega}\frac{w z}{d^2}{\rm d}x\right\vert=\left\vert\int_{\Omega}\frac{w}{d}\frac{z}{d}{\rm d}x\right\vert\leq c\Vert z\Vert,
\end{eqnarray*}
again by Hardy's inequality and noticing that
\begin{eqnarray*}
\displaystyle\left\Vert\frac{w}{d}\right\Vert_{L^2(\Omega)}=\int_{\Omega}\frac{w^2}{d^2}{\rm d}x=\Vert w\Vert^2_{L^2(\Omega,b)}.
\end{eqnarray*}
We prove the continuity. For the first term we have, reasoning as above
\begin{eqnarray*}
\displaystyle\left\vert\int_{\Omega}\left(\frac{1}{w_n^\alpha}-\frac{1}{w^\alpha}\right)z{\rm d}x\right\vert=\left\vert\int_{\Omega}\frac{w^\alpha-w_n^\alpha}{w_n^\alpha w^\alpha}\left(\frac{z}{d}\right)d{\rm d}x\right\vert\leq c'_n\left\Vert\frac{z}{d}\right\Vert_{L^2(\Omega)}\leq c'_n\Vert z\Vert
\end{eqnarray*}
and using as above the mean value theorem and \eqref{eq3.1} we get
\begin{eqnarray*}
&\displaystyle c'_n=\left\Vert\frac{d(w^\alpha-w_n^\alpha)}{w_n^\alpha w^\alpha}\right\Vert_{L^2(\Omega)}=\left(\int_\Omega\frac{\alpha w(\theta)^{2(\alpha-1)}\vert w-w_n\vert^2 d^2}{\vert w_n\vert^{2\alpha}\vert w\vert^{2\alpha}}\right)^{\frac{1}{2}}
\leq c\left(\int_\Omega\frac{\vert w-w_n\vert^2}{d^4}d^2{\rm d}x\right)^{\frac{1}{2}}\leq \\
&\displaystyle c\Vert w-w_n\Vert_{L^2(\Omega,b)}
\end{eqnarray*}
giving the result.
For the second term, we write
\begin{eqnarray*}
\displaystyle\left\vert<\frac{w-w_n}{d^2},z>\right\vert=\left\vert\int_{\Omega}\frac{w-w_n}{d}\frac{z}{d}{\rm d }x\right\vert\leq c\left\Vert\frac{w-w_n}{d}\right\Vert_{L^2(\Omega)}\Vert z\Vert
\end{eqnarray*}

\begin{eqnarray*}
\displaystyle\leq c\Vert w-w_n\Vert_{L^2(\Omega,b)}\Vert z\Vert
\end{eqnarray*}
giving again the results. On the other side, the existence of a constant $M$ is proved in the same way.
\qed

We still have the 
\begin{lemm}\label{lemme3.2}
If $1<\alpha<3$, for any $h\in H^{-1}(\Omega)$, there exists a unique solution $z\in H^1_0(\Omega)$ to the linear problem
\begin{eqnarray}\label{eq3.2}
\displaystyle\left\{\begin{array}{ll}
&-\Delta u+ M\frac{u}{d(x)^2}=h\\
& u=0\mbox{ on }\partial\Omega.
\end{array}\right.
\end{eqnarray}
\end{lemm}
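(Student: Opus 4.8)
The plan is to apply the Lax--Milgram theorem, just as in the proof of Lemma \ref{lemme2.2}, now to the bilinear form
\begin{eqnarray*}
a(u,v)=\int_\Omega\nabla u\cdot\nabla v\,{\rm d}x+M\int_\Omega\frac{uv}{d(x)^2}\,{\rm d}x
\end{eqnarray*}
on $H^1_0(\Omega)\times H^1_0(\Omega)$, paired with the linear form $v\mapsto <h,v>$, which is continuous on $H^1_0(\Omega)$ precisely because $h\in H^{-1}(\Omega)$.

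First I would verify that $a$ is well defined and continuous. Splitting $\frac{uv}{d^2}=\frac{u}{d}\cdot\frac{v}{d}$ and applying Cauchy--Schwarz gives
\begin{eqnarray*}
\left\vert\int_\Omega\frac{uv}{d^2}\,{\rm d}x\right\vert\leq\left\Vert\frac{u}{d}\right\Vert_{L^2(\Omega)}\left\Vert\frac{v}{d}\right\Vert_{L^2(\Omega)}.
\end{eqnarray*}
The point is then to bound each factor by the $H^1_0$ norm. Unlike in Lemma \ref{lemme2.2}, where the exponent $1+\alpha<2$ left an integrable surplus $d^{1-\alpha}$ to absorb, here the exponent is exactly the critical value $2$ and no such margin remains; so I would appeal to the classical Hardy inequality $\Vert u/d\Vert_{L^2(\Omega)}\leq c\Vert u\Vert$, valid for all $u\in H^1_0(\Omega)$ on a smooth bounded domain. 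This yields $\vert a(u,v)\vert\leq c\Vert u\Vert\,\Vert v\Vert$ and hence continuity.

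Coercivity is the easy part: since $M>0$ and the weight is nonnegative,
\begin{eqnarray*}
a(u,u)=\Vert u\Vert^2+M\int_\Omega\frac{u^2}{d^2}\,{\rm d}x\geq\Vert u\Vert^2,
\end{eqnarray*}
so $a$ is coercive with constant $1$, the zeroth-order term only helping. Lax--Milgram then delivers a unique $z\in H^1_0(\Omega)$ with $a(z,v)=<h,v>$ for every $v\in H^1_0(\Omega)$, which is exactly the unique weak solution of \eqref{eq3.2}.

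The single genuine difficulty is the continuity of the zeroth-order term, because $\alpha>1$ brings into play the critical Hardy weight $d^{-2}$, for which the sub-critical Hardy--Sobolev estimate of Section 2 no longer suffices and the sharp Hardy inequality must be invoked. Everything else is routine, and if the sign statement analogous to the one in Lemma \ref{lemme2.2} were needed, it would follow in the same manner from Stampacchia's weak maximum principle.
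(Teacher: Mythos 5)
Your proof is correct and takes essentially the same route as the paper: the paper's proof of Lemma \ref{lemme3.2} consists of the single remark that it is ``very similar to the case in Lemma \ref{lemme2.2}, using again Hardy inequality,'' i.e.\ Lax--Milgram with continuity of the zeroth-order term coming from the splitting $\frac{uv}{d^2}=\frac{u}{d}\cdot\frac{v}{d}$ together with the classical Hardy inequality, and coercivity from $M>0$. You have simply written out in full the details (including the correct observation that the critical weight $d^{-2}$ forces the sharp Hardy inequality rather than the sub-critical estimate of Section 2) that the paper leaves implicit.
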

\proof
It is very similar to the case in Lemma \ref{lemme2.2}, using again Hardy inequality. 

But now we cannot argue as in the proof of Theorem \ref{theo2.1}, the reason is that the embedding in Lemma \ref{lemme2.3} is not compact any more if $\beta=2$. This fact also raises problems when studying linear singular eigenvalue problems in \cite{BeRo}, see also \cite{DiHeMa}. This difficulty may be circumvented as follows. From Lemmas \ref{lemme3.1} and \ref{lemme3.2}, we can construct the following iterative scheme starting from the surpersolution $u^0$:
\begin{eqnarray*}
\left\{\begin{array}{ll}
&-\Delta u^n+\frac{M u^n}{d^2(x)}=\frac{1}{(u^{n-1})^\alpha}+\frac{M u^{n-1}}{d^2(x)}\quad\mbox{in }\Omega,\\
& u=0\quad\mbox{on }\partial\Omega,
\end{array}\right.
\end{eqnarray*}
and a similar one starting this time from the subsolution $u_0$. By using the usual comparison principle arguments we get two monotone sequences satisfying:
\begin{eqnarray*}
u_0\leq u_1\leq\cdot\cdot\cdot\leq u_n\leq\cdot\cdot\cdot\leq u^n\leq\cdot\cdot\cdot\leq
 u^1\leq u^0
\end{eqnarray*}
and it follows that there are subsequences $u_n$ and $u^n$ such that $u_n\to\underline{u}$ and $u^n\to \overline{u}$ pointwise. By exploiting the regularity for the above linear problem and the estimates in Lemma \ref{lemme3.1} we obtain the uniform estimate:
\begin{eqnarray*}
\displaystyle\Vert u^n\Vert_{H^1_0(\Omega)}\leq c\left\Vert\frac{1}{(u^{n-1})^\alpha}+\frac{Mu^{n-1}}{d^2(x)}\right\Vert_{H^{-1}(\Omega)}\leq c
\end{eqnarray*}
where $c$ is a constant independent of $n$. Thus there exists again subsequences $u_n$ and $u^n$ such that $u_n\to u_*$ and $u^n\to u^*$ weakly in $H^1_0(\Omega)$ and then strongly in $L^2(\Omega)$. Obviously, $u_*=\underline{u}$ and $u^*=\overline{u}$. 

Next we should pass to the limit in the above equation \eqref{eq3.2}. The weak formulation is
\begin{eqnarray*}
\displaystyle\int_{\Omega}\nabla u^n\nabla\phi{\rm d}x+M\int_{\Omega} \frac{u^n}{d^2(x)}\phi{\rm d}x=\int_{\Omega}\frac{\phi}{(u^{n-1})^\alpha}{\rm d}x+M\int_{\Omega} \frac{u^{n-1}}{d^2(x)}\phi{\rm d}x
\end{eqnarray*}
for any $\phi\in H^1_0(\Omega)$. The first term on the left-hand side of the above expression converges clealy to $\int_{\Omega}\nabla\overline{u}\nabla\phi$. Concerning the first term on the right-hand side we have, by using the dominate convergence thorem, that there is pointwise convergence to $\frac{\phi}{(\overline{u})^{\alpha}}$. Moreover,
\begin{eqnarray*}
\displaystyle\left\vert\int_{\Omega}\frac{\phi}{(u^{n-1})^{\alpha}}{\rm d}x\right\vert=\left\vert\int_{\Omega}\frac{\phi}{d}\frac{d}{(u^{n-1})^\alpha}{\rm d}x\right\vert\leq c\left\Vert\frac{\phi}{d}\right\Vert_{L^2(\Omega)}\left\Vert\frac{d}{(u^{n-1})^\alpha}\right\Vert_{L^2(\Omega)}
\end{eqnarray*}
where $c$ does not  depend on $n$. We have 
\begin{eqnarray*}
\left\Vert\frac{d}{(u^{n-1})^\alpha}\right\Vert^2_{L^2(\Omega)}=\int_{\Omega}\frac{d^2}{(u^{n-1})^{2\alpha}}{\rm d}x\leq c\int_{\Omega}d^{2-\frac{4\alpha}{1+\alpha}}<+\infty
\end{eqnarray*}
since $1+\frac{2(1-\alpha)}{1+\alpha}=\frac{3-\alpha}{1+\alpha}>0$. For the second terms on both sides we have
\begin{eqnarray*}
\left\vert\int_{\Omega}\frac{u^n\phi}{d^2}{\rm d}x\right\vert=\int_{\Omega}\left\vert\frac{\phi}{d}\right\vert\left\vert\frac{u^n}{d}\right\vert{\rm d}x\leq\left\Vert\frac{\phi}{d}\right\Vert_{L^2(\Omega)}\left\Vert\frac{u^n}{d}\right\Vert_{L^2(\Omega)}
\end{eqnarray*}
and 
\begin{eqnarray*}
\left\Vert\frac{u^n}{d}\right\Vert^2_{L^2(\Omega)}=\int_{\Omega}\left(\frac{u^n}{d}\right)^2{\rm d}x\leq c\int_{\Omega}d^{\frac{2(1-\alpha)}{1+\alpha}}{\rm d}x<\infty
\end{eqnarray*}
as above. 

It only remains to find ordered sub and supersolutions for the problem. It seems natural to look for functions of the form $c\phi_1^t$ with $t=\frac{2}{1+\alpha}<1$. For the subsolution $u_0$, we obtain 
\begin{eqnarray*}
-\Delta (\phi_1^t)=\phi_1^{t-2}\left(t(1-t)\vert\nabla\phi_1\vert^2+\lambda_1 t\phi_1^2\right)=\lambda_1 t\phi_1^t+ t(1-t)\phi_1^{t-2}\vert\nabla\phi_1\vert^2.
\end{eqnarray*}
Hence we get
\begin{eqnarray*}
&-\Delta u^0-\frac{1}{(u^0)^\alpha}=ct(t-1)\phi_1^{t-2}\vert\phi_1\vert^2+c\lambda_1 t\phi_1^t-\frac{1}{c^\alpha\phi_1^{\alpha t}}=\\
 &\frac{c t(t-1)\vert \nabla \phi_1\vert^2}{\phi_1^{\frac{2\alpha}{1+\alpha}}}+\lambda_1 ct\phi_1^t-\frac{1}{c^\alpha\phi_1^{\frac{2\alpha}{1+\alpha}}}\leq 0
\end{eqnarray*}
using that $t-2=-\frac{2\alpha}{1+\alpha}$, and this is equivalent to
\begin{eqnarray*}
t(1-t)\vert\nabla\phi_1\vert^2+\lambda_1 t\phi_1^{t+\frac{2\alpha}{1+\alpha}}\leq \frac{1}{c^{\alpha+1}}.
\end{eqnarray*}
Hence it is enough to have 
\begin{eqnarray*}
t(1-t)\vert\nabla\phi_1\vert^2+\lambda_1 t\leq \frac{1}{c^{\alpha +1}}
\end{eqnarray*}
which is satisfied for $c>0$ small.

Reasoning in a similar way for the supersolution $u^0=C\phi_1^t$, we infer that
\begin{eqnarray*}
t(1-t)\vert\nabla \phi_1\vert^2+\lambda_1 t\phi_1^{t+\frac{2\alpha}{1+\alpha}}\geq \frac{1}{C^{1+\alpha}}.
\end{eqnarray*}
We know that $\vert\nabla\phi_1\vert\geq \delta_1>0$ in $\Omega_\varepsilon\eqdef\left\{x\in\Omega\vert d(x)\leq \varepsilon\right\}$ for some $\varepsilon>0$. Then,
\begin{eqnarray*}
t(1-t)\vert\nabla\phi_1\vert^2\geq t(1-t)\delta_1^2\geq \frac{1}{C^{1+\alpha}}
\end{eqnarray*}
on $\Omega_\varepsilon$ for $C>C_1>0$ large enough. On $\Omega\backslash\Omega_\varepsilon$, we have that $\phi_1\geq \delta_2$ for some $\delta_2>0$ and it is enough to have
\begin{eqnarray*}
\lambda_1 t\delta_2^{t+\frac{2\alpha}{1+\alpha}}\geq \frac{1}{C^{1+\alpha}}
\end{eqnarray*}
which is satisfied for $C>C_2$ for some $C_2>0$ large enough. Finally we pick $C>\max(C_1,C_2)$.\qed

We have then proved 
\begin{theo}\label{theo3.3}
Assume that there exists a subsolution $u_0$ (resp. a supersolution $u^0$) satisfying \eqref{eq3.1}. Then there exists a minimal solution $\underline{u}$ (resp. a maximal solution $\overline{u}$) such that
\begin{eqnarray*}
u_0\leq\underline{u}\leq\overline{u}\leq u^0.
\end{eqnarray*}
The uniqueness and linearized stability are obtained in this case as well. Since proofs are very similar, we only point out the differences.
\end{theo}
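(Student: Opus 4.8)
The plan is to treat the three assertions packaged in the statement separately. Since the construction preceding the statement already produces the minimal and maximal solutions, I would only record existence briefly and concentrate on uniqueness and linearized stability, indicating precisely where the critical weight $d^{-2}$ forces changes with respect to Section 2. For existence it suffices to note that the monotone iteration built from Lemmas \ref{lemme3.1} and \ref{lemme3.2} yields the ordered sequences $u_0\leq u_1\leq\cdots\leq u_n\leq\cdots\leq u^n\leq\cdots\leq u^1\leq u^0$; the uniform $H^1_0$ bound furnished by the estimates of Lemma \ref{lemme3.1} gives weak $H^1_0$ and strong $L^2$ limits $\underline u,\overline u$, and the passage to the limit in the weak formulation is justified by dominated convergence precisely because $\left\|d/(u^{n-1})^\alpha\right\|_{L^2(\Omega)}$ is finite, i.e. because $\tfrac{3-\alpha}{1+\alpha}>0$. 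Together with the sub/supersolutions $c\phi_1^{t}$, $t=\tfrac{2}{1+\alpha}$, exhibited above, this produces the solutions trapped between $u_0$ and $u^0$.

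For uniqueness I would reproduce the argument of Theorem \ref{theo2.7} essentially verbatim: multiplying the equation for $u$ by $v$ and that for $v$ by $u$, applying Green's formula, and subtracting gives $\int_\Omega \frac{v^{\alpha+1}-u^{\alpha+1}}{u^\alpha v^\alpha}\,{\rm d}x=0$, which forces $u\equiv v$ in the ordered case $u\leq v$; the non-ordered case is then reduced to the ordered one by comparing both $u$ and $v$ with the minimal solution $\underline u\leq u,v$. The only point that genuinely needs re-checking is that every integral is meaningful. Since $u,v\in[u_0,u^0]$ behave like $d^{2/(1+\alpha)}$, one has $\frac{v}{u^\alpha}\sim d^{2(1-\alpha)/(1+\alpha)}$, which is integrable near $\partial\Omega$ exactly because $\frac{2(1-\alpha)}{1+\alpha}+1=\frac{3-\alpha}{1+\alpha}>0$. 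Thus the restriction $\alpha<3$ is precisely what keeps the uniqueness computation legitimate, and this is the single difference worth emphasizing.

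For linearized stability the eigenvalue problem \eqref{eq2.5} now carries the potential $\frac{\alpha}{u^{1+\alpha}}\sim\frac{\alpha}{d^{2}}$, the critical Hardy weight, so Lemma \ref{lemme2.3} is no longer available; this is the step I expect to be the main obstacle, and the whole point is that it can be circumvented rather than resolved. I would introduce the symmetric form $a_u(w,v)=\int_\Omega\nabla w\cdot\nabla v\,{\rm d}x+\alpha\int_\Omega\frac{wv}{u^{1+\alpha}}\,{\rm d}x$, which is continuous on $H^1_0(\Omega)$ by the Hardy--Sobolev inequality and coercive because the potential is nonnegative, so that $a_u(w,w)\geq\|w\|^2$; Lax--Milgram then yields a continuous solution operator $P_u:H^{-1}(\Omega)\to H^1_0(\Omega)$ exactly as in Lemma \ref{lemme3.2}. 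The crucial observation is that, although the weighted embedding $H^1_0(\Omega)\to L^2(\Omega,d^{-2})$ fails to be compact, the operator governing the spectrum is $T_u=i\circ P_u:L^2(\Omega)\to L^2(\Omega)$, where $i$ is the \emph{unweighted} Rellich embedding $H^1_0(\Omega)\hookrightarrow L^2(\Omega)$, which remains compact.

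Consequently $T_u$ is a self-adjoint, compact, positive operator on $L^2(\Omega)$, the classical spectral theorem supplies the sequence $\mu_n=1/\nu_n$ of eigenvalues of \eqref{eq2.5} with the usual variational characterization, and $\mu_1=\inf_{w\neq0}a_u(w,w)/\|w\|_{L^2}^2\geq\lambda_1>0$ because the potential has a favorable sign, which is exactly the linearized asymptotic stability. The weak (Stampacchia) maximum principle makes $P_u$ positivity preserving and irreducible, so the version of the Krein--Rutman theorem of \cite{DaKoMe}, which weakens the strong positivity requirement, applies and gives that $\mu_1$ is simple with a positive eigenfunction $\phi_1>0$ in $\Omega$, completing the argument. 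The essential lesson is that the singular weight $d^{-2}$ obstructs only the \emph{weighted} compactness used for the fixed-point existence scheme of Section 2, whereas both the coercivity needed for Lax--Milgram and the compactness needed for the spectral theory survive unchanged.
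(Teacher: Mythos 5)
Your proposal is correct and follows essentially the same route as the paper: existence via the monotone iteration built on Lemmas \ref{lemme3.1} and \ref{lemme3.2} with the explicit sub/supersolutions $c\phi_1^{2/(1+\alpha)}$, uniqueness by repeating the argument of Theorem \ref{theo2.7} together with the integrability check $\frac{2(1-\alpha)}{1+\alpha}+1=\frac{3-\alpha}{1+\alpha}>0$, and linearized stability by rerunning the $L^2(\Omega)$ spectral argument from the end of Section 2 with $u^{1+\alpha}$ behaving like $d^2$ and Hardy's inequality. Your only addition is to make explicit what the paper leaves implicit, namely that the compactness needed for the spectral theory is that of the unweighted Rellich embedding $H^1_0(\Omega)\hookrightarrow L^2(\Omega)$, so the failure of the weighted embedding at $\beta=2$ obstructs only the fixed-point scheme and not the Krein--Rutman argument.
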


\begin{theo}
Under the assumptions of Theorem \ref{theo3.3}, there is a unique solution in the interval $[u_0, u^0]$ which is linearly asymptotically stable.
\end{theo}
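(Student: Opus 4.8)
The plan is to follow closely the proofs of Theorems~\ref{theo2.7} and~\ref{theo2.8}, since the only genuine differences come from the altered boundary behaviour $u\sim d(x)^{2/(1+\alpha)}$ imposed by~\eqref{eq3.1} and from the fact that the relevant singular weight is now the critical one $b(x)=d(x)^{-2}$. For uniqueness I would first treat the ordered case $u\le v$: multiplying the equation for $u$ by $v$ and the equation for $v$ by $u$ and integrating by parts via Green's formula gives
\[
\int_\Omega \nabla u\cdot\nabla v\,\mathrm{d}x=\int_\Omega\frac{v}{u^\alpha}\,\mathrm{d}x=\int_\Omega\frac{u}{v^\alpha}\,\mathrm{d}x,
\]
whence
\[
\int_\Omega\frac{v^{\alpha+1}-u^{\alpha+1}}{u^\alpha v^\alpha}\,\mathrm{d}x=0 .
\]
Since $v\ge u>0$ and $\alpha+1>0$ the integrand is nonnegative, forcing $u\equiv v$; the case $u\not\le v$, $v\not\le u$ is reduced to the ordered one by comparing both with the minimal solution $\underline u\le u,v$, which yields $\underline u=u=v$. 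The single point that must be re-examined is that these integrals converge. By~\eqref{eq3.1} one has $v/u^\alpha\le c\,d(x)^{\frac{2}{1+\alpha}-\frac{2\alpha}{1+\alpha}}=c\,d(x)^{\frac{2(1-\alpha)}{1+\alpha}}$, which is integrable on $\Omega$ precisely because
\[
\frac{2(1-\alpha)}{1+\alpha}+1=\frac{3-\alpha}{1+\alpha}>0 ,
\]
i.e. exactly because $\alpha<3$. This is the very threshold $\frac{3-\alpha}{1+\alpha}>0$ already met in Lemma~\ref{lemme3.1}, and it is where the upper restriction on $\alpha$ is genuinely used.

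For the linearized stability I would analyse the eigenvalue problem~\eqref{eq2.5}, now with potential $\alpha\,u^{-(1+\alpha)}$. Since $u\sim d(x)^{2/(1+\alpha)}$ we have $u^{1+\alpha}\sim d(x)^2$, so this potential is of Hardy type, comparable to $\alpha\,d(x)^{-2}$, i.e. the critical weight of this section. Consequently the embedding $H^1_0(\Omega)\to L^2(\Omega,d^{-2})$ is no longer compact (Lemma~\ref{lemme2.3} requires $\beta<2$), so the eigenvalue problem cannot be set up in the weighted space as in the previous section. The remedy is to work in the unweighted $L^2(\Omega)$: by Hardy's inequality the bilinear form $\int_\Omega\nabla w\cdot\nabla v\,\mathrm{d}x+\alpha\int_\Omega u^{-(1+\alpha)}wv\,\mathrm{d}x$ is continuous and coercive on $H^1_0(\Omega)$ (exactly as in Lemma~\ref{lemme3.2}), so its inverse $S:L^2(\Omega)\to H^1_0(\Omega)$ is well defined, and composing with the Rellich embedding $i:H^1_0(\Omega)\to L^2(\Omega)$ — the case $\beta=0<2$, hence compact — yields a self-adjoint compact operator $T=i\circ S$ on $L^2(\Omega)$, whose classical spectral theory produces the sequence of eigenvalues with the usual variational characterization.

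Because the potential is nonnegative, coercivity forces $T>0$ and hence $\mu_1>0$, which is precisely the assertion of linearized asymptotic stability. Simplicity of $\mu_1$ and positivity of its eigenfunction then follow, just as in Theorem~\ref{theo2.8}, from the irreducibility of $S$ (the weak, or Stampacchia, maximum principle: $h\ge 0$, $h\not\equiv 0$ implies $Sh>0$) together with the Krein--Rutman theorem in the weakened form of~\cite{DaKoMe}. I expect the main obstacle to be exactly the loss of compactness at the critical exponent $\beta=2$; the idea that resolves it is to base the entire spectral analysis on the resolvent acting on $L^2(\Omega)$, where the only compactness invoked is the Rellich embedding ($\beta=0$), rather than on the non-compact weighted embedding. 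A secondary point requiring care is the integrability of the cross terms in the uniqueness step, which is what pins down the sharp upper bound $\alpha<3$.
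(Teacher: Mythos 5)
Your proof is correct and takes essentially the same route as the paper: uniqueness via the Green's formula argument of Theorem \ref{theo2.7}, with the only new point being the integrability check $\frac{2(1-\alpha)}{1+\alpha}+1=\frac{3-\alpha}{1+\alpha}>0$ (i.e.\ $\alpha<3$), and linearized stability by rerunning the end of Section 2 in unweighted $L^2(\Omega)$, where Hardy's inequality controls the critical potential $\alpha u^{-(1+\alpha)}\sim d^{-2}$ and the only compactness used is the Rellich embedding, followed by Krein--Rutman in the form of \cite{DaKoMe}. Your write-up simply makes explicit (and, in the integrability estimate, slightly cleaner) what the paper's terse proof leaves to the reader.
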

\proof For uniqueness the same arguments in Theorem \ref{theo2.7} work here as well. We only show that all integrals are meaningful. We have, e.g., that
\begin{eqnarray*}
\int_\Omega\frac{v}{u^\alpha}{\rm d}x\leq c\int_\Omega d(x)^{1-\alpha}{\rm d}x\leq c\int_\Omega d(x)^{\frac{2(1-\alpha)}{1+\alpha}}{\rm d}x<\infty
\end{eqnarray*}
since $\frac{2(1-\alpha)}{1+\alpha}+1=\frac{3-\alpha}{1+\alpha}>0$.

For linearized stability it is enough to check that all the arguments at the end of Section 2 still work taking into account that $u^{1+\alpha}$ "behaves like" $d(x)^2$ and using again Hardy's inequality.
\section{Regularity of weak solutions}
We deal now with the following elliptic problem  $({\rm P_1})$:
\begin{equation*}
(\rm P_1)\qquad 
\left  \{ \begin{array}{l}
-\Delta u = \dfrac{1}{d^\beta u^{\alpha}}\quad \text{ in } \Omega \\
u = 0\  \text{ on } \partial\Omega,\ u>0 \text{ on } \Omega,
\end{array}
\right .
\end{equation*}
where $\Omega$ is an open bounded domain with smooth boundary in $\mathbb R^{N}$, $\alpha\in \mathbb R$, $\ 0\leq \beta < 2$. We prove the following regularity result for solutions to $({\rm P_1})$:
\begin{theo}\label{regularity}
Let $\alpha+\beta>1$. Then the unique positive  solution $u\in C^2(\Omega)\cap C^0(\overline{\Omega})$ to Problem $({\rm P_1})$ satisfies 
\begin{equation}\label{eee}
u \in W^{1,q}_0(\Omega)\quad \text{ for all } 1<q< \bar{q}_{\alpha,\beta}=\dfrac{1+\alpha}{\alpha + \beta -1}.
\end{equation}
Furthermore, the restriction given by $\bar{q}_{\alpha,\beta}$ is sharp.
\end{theo}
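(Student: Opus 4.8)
The plan is to reduce everything to the precise boundary behaviour of $u$. Setting $\gamma\eqdef\frac{2-\beta}{1+\alpha}$, a balancing of powers in the equation ($-\Delta(d^\gamma)\sim d^{\gamma-2}$ against $d^{-\beta}(d^\gamma)^{-\alpha}=d^{\gamma-2}$) singles out $d^\gamma$ as the expected profile, and the hypothesis $\alpha+\beta>1$ is exactly the condition $\gamma<1$, i.e. the regime where $\nabla u$ blows up at $\partial\Omega$; note also $\bar q_{\alpha,\beta}>1$ automatically since $\beta<2$. The first step is to establish the two-sided bound
\[
c_1\,d(x)^\gamma\le u(x)\le c_2\,d(x)^\gamma\quad\text{in }\Omega,
\]
by comparison with sub- and supersolutions of the form $c\,\phi_1^{\gamma}$ and $C\,\phi_1^{\gamma}$, exactly as in Section~3 with the exponent $t=\frac{2}{1+\alpha}$ replaced by $\gamma$ and the extra weight $d^{-\beta}$ carried through; the admissibility of the resulting exponents is guaranteed by $0\le\beta<2$.

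The heart of the matter is the gradient estimate $|\nabla u(x)|\le C\,d(x)^{\gamma-1}$ for $d(x)$ small, which I would obtain by an interior rescaling. Fix $x_0$ with $d_0\eqdef d(x_0)$ small, set $\rho=d_0/2$, and define on $B_1(0)$ the normalised function $v(y)\eqdef d_0^{-\gamma}\,u(x_0+\rho y)$. On $B_\rho(x_0)$ one has $d\asymp d_0$ and $u\asymp d_0^\gamma$, so $v$ is bounded above and below by constants independent of $x_0$, while the rescaled right-hand side
\[
-\Delta_y v(y)=d_0^{-\gamma}\rho^2\,\frac{1}{d^\beta u^\alpha}\Big|_{x_0+\rho y}
\]
is bounded in $L^\infty(B_1)$ uniformly in $x_0$ (here one uses $\gamma(1+\alpha)=2-\beta$, which gives $d^{-\beta}u^{-\alpha}\asymp d_0^{\gamma-2}$ and hence $-\Delta_y v\asymp 1$). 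Interior $W^{2,p}$ estimates followed by Sobolev embedding (or interior Schauder estimates) then yield $|\nabla_y v(0)|\le C$, and undoing the scaling gives $|\nabla u(x_0)|=\rho^{-1}d_0^{\gamma}\,|\nabla_y v(0)|\le C\,d_0^{\gamma-1}$. Since $u\in C^2(\Omega)$, nothing is needed away from $\partial\Omega$.

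Granting the upper bound, membership follows by integrating in a tubular neighbourhood: using boundary-fitted coordinates with bounded Jacobian, $\int_{\{d<\delta\}}|\nabla u|^q\,\dx\le C\int_{\partial\Omega}\int_0^\delta t^{\,q(\gamma-1)}\,\dt\,\dsigma$, and the inner integral converges precisely when $q(\gamma-1)>-1$, i.e. when $q<\bar q_{\alpha,\beta}$; adding the (finite) interior contribution gives $u\in W^{1,q}(\Omega)$, and since $u\in C^0(\overline{\Omega})$ vanishes on $\partial\Omega$ this upgrades to $u\in W^{1,q}_0(\Omega)$. For the sharpness I would prove the matching lower bound $|\nabla u(x)|\ge c\,d(x)^{\gamma-1}$ near $\partial\Omega$ by a blow-up centred at boundary points: flattening $\partial\Omega$ near $\sigma_0$ and setting $U_\rho(y)=\rho^{-\gamma}u(\sigma_0+\rho y)$, the $U_\rho$ solve in the limit $\rho\to0$ the half-space problem $-\Delta U=y_N^{-\beta}U^{-\alpha}$ with $U=0$ on $\{y_N=0\}$, whose solution comparable to $y_N^\gamma$ is, by uniqueness together with tangential translation invariance, exactly $U_*(y)=c_*\,y_N^\gamma$, so that $\nabla U_*\neq 0$. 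This forces $|\nabla u|\gtrsim d^{\gamma-1}$, whence $\int_{\{d<\delta\}}|\nabla u|^{\bar q_{\alpha,\beta}}\,\dx\gtrsim\int_0^\delta t^{\,\bar q_{\alpha,\beta}(\gamma-1)}\,\dt=+\infty$ and $u\notin W^{1,\bar q_{\alpha,\beta}}_0(\Omega)$.

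I expect the main obstacle to be precisely this lower bound: identifying the limiting half-space profile and invoking the uniqueness that pins it down to $c_*\,y_N^\gamma$ with non-degenerate normal derivative. The upper gradient estimate, although technical, is a routine rescaling once the two-sided bound $c_1 d^\gamma\le u\le c_2 d^\gamma$ is secured, and the passage from the gradient estimate to $W^{1,q}_0$-membership is then a one-line integration.
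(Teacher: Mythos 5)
Your proof of the membership part, $u\in W^{1,q}_0(\Omega)$ for $1<q<\bar q_{\alpha,\beta}$, is correct and is essentially the paper's own argument: the paper likewise starts from the two-sided bound $c_1 d^{\gamma}\leq u\leq c_2 d^{\gamma}$ with $\gamma=\frac{2-\beta}{1+\alpha}$ (phrased there as $u\in\mathcal C^+_{\phi_{\alpha,\beta}}(\overline{\Omega})$, $\phi_{\alpha,\beta}=\phi_1^{\gamma}$), proves $|\nabla u(x)|\leq C\,d(x)^{\frac{1-\alpha-\beta}{1+\alpha}}$ by exactly your rescaling device (balls $B_{2r}(x_0)$ with $r=d(x_0)/3$, the interior Bers--John--Schechter estimate on a fixed ball, scaling back), and then integrates $d^{q(\gamma-1)}$ near $\partial\Omega$. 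Up to the choice $\rho=d_0/2$ versus $r=d_0/3$ and which interior elliptic estimate is quoted, this coincides with Lemmas \ref{Locale-regularity}--\ref{esti-grad} and the final integration in the paper.

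The genuine gap is in your sharpness argument. You reduce non-membership at $q=\bar q_{\alpha,\beta}$ to the pointwise lower bound $|\nabla u|\geq c\,d^{\gamma-1}$ near $\partial\Omega$, obtained from a boundary blow-up whose limit is claimed, ``by uniqueness together with tangential translation invariance,'' to be $c_* y_N^{\gamma}$. Two things are missing: (i) the classification of positive solutions of $-\Delta U=y_N^{-\beta}U^{-\alpha}$ in the half-space that are comparable to $y_N^{\gamma}$ is not an off-the-shelf fact, and you do not prove it --- as you yourself concede, it is the main obstacle, so as written the sharpness rests on an unproven lemma that is arguably as hard as the theorem; (ii) even granting that classification, concluding $|\nabla u|\geq c\,d^{\gamma-1}$ at \emph{every} point near $\partial\Omega$ requires convergence of gradients of the rescaled family, uniformly over all boundary points and all small scales, i.e.\ a compactness argument you also do not supply. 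Note that what comes for free from $u\geq c_1 d^{\gamma}$ and the mean value theorem is only a lower bound for $|\nabla u|$ at \emph{some} point on each normal segment, which is not enough for a pointwise estimate.

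None of that machinery is needed. Sharpness follows in one line from the lower bound you already secured, $u\geq c_1 d^{\gamma}$, together with Hardy's inequality in $W^{1,q}_0(\Omega)$ (valid for $1<q<\infty$ on a smooth bounded domain; recall $\bar q_{\alpha,\beta}>1$ since $\beta<2$): if $u\in W^{1,\bar q_{\alpha,\beta}}_0(\Omega)$, then
\begin{equation*}
\int_\Omega \Bigl(\frac{u}{d}\Bigr)^{\bar q_{\alpha,\beta}}{\rm d}x\leq C\int_\Omega |\nabla u|^{\bar q_{\alpha,\beta}}{\rm d}x<\infty,
\end{equation*}
whereas $\bigl(\frac{u}{d}\bigr)^{\bar q_{\alpha,\beta}}\geq c_1^{\bar q_{\alpha,\beta}}\, d^{(\gamma-1)\bar q_{\alpha,\beta}}=c_1^{\bar q_{\alpha,\beta}}\, d^{-1}$ (one checks $(\gamma-1)\bar q_{\alpha,\beta}=-1$ exactly), and $\int_\Omega d^{-1}{\rm d}x=\infty$, a contradiction. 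This is the Lazer--McKenna type reasoning the paper alludes to when it says the question ``is reduced'' to the convergence of $\int_\Omega d^{\,q(1-\alpha-\beta)/(1+\alpha)}{\rm d}x$; it requires no gradient lower bound, no blow-up, and no half-space classification.
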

\begin{rema}
\begin{itemize}
\item[(i)] The uniqueness of the positive solution to $({\rm P_1})$ follows from the classical strong maximum principle.
\item[(ii)] The existence of $u$ can be obtained by the same approximation procedure as in \cite{CrRaTa} and $u\in \mathcal C^+_{\phi_{\alpha,\beta}} (\overline{\Omega})$ where 
\begin{equation}
\mathcal C^+_{\phi_{\alpha,\beta}} (\overline{\Omega})= \{v \in  C(\overline{\Omega})\ | \quad \exists\  c_1,\ c_2>0\   : \ c_1 \phi_{\alpha,\beta} \leq v \leq c_2 \phi_{\alpha,\beta}\ \text{  a.e. in } \Omega\}
\end{equation}
with $\phi_{\alpha,\beta}\eqdef\phi_1^{\frac{2-\beta}{1+\alpha}}$ when $\alpha+\beta>1$. Existence of very weak solutions was proved also in \cite{DiHeRa}.
\item[(iii)] Theorem \ref{regularity} still holds when $\frac{1}{d(x)^\beta}$ is replaced  by  a more general weight $K_0(x)$ behaving like $\frac{1}{d(x)^\beta}$ near $\partial\Omega$.
\item[(iv)] If $\alpha+\beta<1$, we know that $u\in C^{1,\mu}(\overline{\Omega})$ for some $\mu\in (0,1)$ (see \cite{GuLi}). Theorem \ref{regularity} complements to some extent results in \cite{GuLi}.
\end{itemize}
\end{rema}
To prove Theorem \ref{regularity}, we use the following result concerning interior regularity for linear elliptic problems (see {\sc Bers-John-Schechter} \cite[Theorem 4, Chapter 5]{BeJoSc} or Lemma 1.5 in \cite{CrRaTa}):
\begin{lemm} \label{Locale-regularity} Let $D_0$ and $D$ be opem bounded domains in $\mathbb R^N$ with $\overline{D}_0 \subset D$. Assume that $L$ is a second order uniformly elliptic operator with coefficients in $\mathcal{C}(\overline{D}) $ and let $q>N$. Then there exists a positive constant $K=K(N,q,\delta(D), d(D_0,\partial D),L)$ such that for any $w \in W^{2,q}_0(D)$
\begin{equation}
\|w\|_{W^{2,q}(D_0)} \leq K\left (\|Lw\|_{L^q(D)}  + \|w\|_{L^q(D)}\right ).
\end{equation}
In particular we have the estimate
\begin{equation}\label{Bers-local-estimate}
\|w\|_{W^{2,q}(D_0)} \leq K\left (\|Lw\|_{L^\infty(D)}  + \|w\|_{L^\infty(D)}\right ).
\end{equation}
\end{lemm}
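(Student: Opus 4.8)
The plan is the classical three-step interior a priori estimate: reduce $L$ to a constant-coefficient model operator, invoke the Calder\'on--Zygmund singular-integral bound there, and then return to variable coefficients by freezing them on small balls and patching with a partition of unity. Note first that \eqref{Bers-local-estimate} is immediate once the $L^q$ estimate is known, since $D$ is bounded and H\"older's inequality gives $\|Lw\|_{L^q(D)}\le|D|^{1/q}\|Lw\|_{L^\infty(D)}$ and likewise for $w$; so I concentrate on the $L^q$ bound. The analytic core is the Calder\'on--Zygmund inequality: for $w\in C_c^\infty(\R^N)$,
\[
\|D^2w\|_{L^q(\R^N)}\le C_{N,q}\,\|\Delta w\|_{L^q(\R^N)},\qquad 1<q<\infty,
\]
which follows from the Newtonian potential representation $w=\Gamma*\Delta w$, whose second derivatives express each $\partial_{ij}w$ as a Calder\'on--Zygmund singular integral of $\Delta w$, hence bounded on $L^q$. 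A linear change of variables diagonalizing the symbol extends this to any constant-coefficient operator $L_0=\sum_{i,j}a^0_{ij}\partial_{ij}$, with constant controlled by the ellipticity bounds.

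\textbf{Freezing coefficients.} Writing $L=\sum_{i,j}a_{ij}\partial_{ij}+\sum_i b_i\partial_i+c$, I would use the uniform continuity of the $a_{ij}$ on $\overline D$: given $\varepsilon>0$ there is $\rho>0$ with $|a_{ij}(x)-a_{ij}(x_0)|<\varepsilon$ for $|x-x_0|<\rho$. For $w$ supported in $B_\rho(x_0)$, freezing $L_0=\sum a_{ij}(x_0)\partial_{ij}$ and writing
\[
L_0w=Lw-\sum_{i,j}\bigl(a_{ij}(x)-a_{ij}(x_0)\bigr)\partial_{ij}w-\sum_i b_i\partial_iw-cw,
\]
Step~1 gives $\|D^2w\|_{L^q}\le C\|Lw\|_{L^q}+C\varepsilon\|D^2w\|_{L^q}+C(\|Dw\|_{L^q}+\|w\|_{L^q})$. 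Choosing $\varepsilon$ with $C\varepsilon\le\tfrac12$ absorbs the Hessian term, and the interpolation inequality $\|Dw\|_{L^q}\le\delta\|D^2w\|_{L^q}+C_\delta\|w\|_{L^q}$ (valid for compactly supported $w$) removes the first-order contribution, leaving the local bound $\|w\|_{W^{2,q}}\le C(\|Lw\|_{L^q}+\|w\|_{L^q})$ for all such $w$.

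\textbf{Patching.} For a general $w\in W^{2,q}_0(D)$ I would cover $\overline D_0$ by finitely many balls $B_{\rho/2}(x_k)$ with $B_\rho(x_k)\subset D$, choose cutoffs $\eta_k\in C_c^\infty(B_\rho(x_k))$ equal to $1$ on $B_{\rho/2}(x_k)$, and apply the previous step to each $v_k:=\eta_kw$. Since $Lv_k=\eta_kLw$ plus commutator terms linear in $Dw$ and $w$ with bounded coefficients, this yields
\[
\|w\|_{W^{2,q}(B_{\rho/2}(x_k))}\le C\bigl(\|Lw\|_{L^q(B_\rho(x_k))}+\|Dw\|_{L^q(B_\rho(x_k))}+\|w\|_{L^q(B_\rho(x_k))}\bigr),
\]
and summing over the finite cover gives $\|w\|_{W^{2,q}(D_0)}\le C(\|Lw\|_{L^q(D)}+\|Dw\|_{L^q(D)}+\|w\|_{L^q(D)})$. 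The surviving first-order term is then eliminated by the interpolation inequality combined with a weighted-norm iteration over nested balls, which trades the larger-ball Hessian for the smaller-ball one and is exactly what forces the dependence of the constant on $d(D_0,\partial D)$. The outcome is
\[
\|w\|_{W^{2,q}(D_0)}\le K\bigl(\|Lw\|_{L^q(D)}+\|w\|_{L^q(D)}\bigr),
\]
with $K=K(N,q,\delta(D),d(D_0,\partial D),L)$.

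\textbf{Main obstacle.} Two points carry the real weight. The first is the Calder\'on--Zygmund estimate underpinning Step~1, where the full strength of singular-integral theory enters; everything built on it is a soft perturbation argument. The second, and the more delicate bookkeeping, is the elimination of the gradient term in the patching step: because that term arises from the commutators $[L,\eta_k]w$ it is genuine forcing rather than part of $v_k$, so it cannot be absorbed ball-by-ball and must instead be removed through a weighted interior-norm iteration, whose convergence depends on choosing the interpolation parameter small relative to the Hessian coefficient. Tracking how the constant accumulates through $\rho$ (the freezing radius) and the covering multiplicity is what produces the stated dependence $K=K(N,q,\delta(D),d(D_0,\partial D),L)$. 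Finally, the hypothesis $q>N$ is stronger than the $1<q<\infty$ used in the proof and plays no role here; it is invoked only when the lemma is applied, through the Morrey embedding $W^{2,q}\hookrightarrow C^1$.
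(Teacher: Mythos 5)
The paper offers no proof of this lemma at all: it is quoted from the literature (Bers--John--Schechter, Theorem 4, Chapter 5, or Lemma 1.5 of Crandall--Rabinowitz--Tartar), so there is no in-paper argument to compare against, and what you have done is reconstruct the classical proof that the citation outsources. Your reconstruction is correct in outline and is essentially the argument behind the cited result: the Calder\'on--Zygmund bound $\|D^2w\|_{L^q}\leq C\|\Delta w\|_{L^q}$ via the Newtonian potential, transfer to constant coefficients by a linear change of variables, freezing with absorption of the $\varepsilon\|D^2w\|_{L^q}$ term using uniform continuity of the $a_{ij}$ on $\overline{D}$, and patching over a finite cover of $\overline{D}_0$. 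You also correctly identify the one genuinely delicate point, which a careless write-up would fumble: the gradient term coming from the commutators $[L,\eta_k]w$ is supported on the larger balls, so plain interpolation on $D$ is unavailable (one has no Hessian bound up to $\partial D$), and it must be removed by interpolation combined with an iteration over nested domains --- equivalently the weighted interior seminorms used in the standard texts; in a fully written proof this is the step where the details would have to be executed, e.g.\ via the usual absorption lemma applied to $\phi(t)=\|w\|_{W^{2,q}(D_t)}$ for an increasing family of domains interpolating between $D_0$ and $D$, and it is exactly what produces the dependence of $K$ on $d(D_0,\partial D)$. Two further observations of yours are accurate and worth keeping: if the hypothesis $w\in W^{2,q}_0(D)$ is taken literally, extension by zero even allows a global absorption of the gradient term, short-circuiting the iteration (the subscript $0$ is in any case inessential, as the cited sources state an interior estimate); and the assumption $q>N$ plays no role in the estimate itself, which holds for all $1<q<\infty$ --- it matters only downstream, where the paper combines this lemma with the embedding $W^{2,q}\hookrightarrow C^1$ to obtain the pointwise gradient bound \eqref{local-bound} of the following lemma.
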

We have the following auxiliary result.
\begin{lemm}
There exists a constant $K_1>0$ such that if $r \in (0,1]$, $x_0 \in \Omega$, $B_{2r}(x_0) = \{x\in \mathbb R^N |  |x-x_0|<2r\} \subset \Omega$ and $v \in W^{2,q}(B_{2r}(x_0))$ where $q>N$, then 
\begin{equation} \label{local-bound}
 |\nabla v(x)| \leq K_1\left (r \|\Delta v\|_{L^\infty(B_{2r}(x_0))} + \dfrac{1}{r} \|v\|_{L^\infty(B_{2r}(x_0))}\right)
\end{equation}
for all $ x \in B_{r}(x_0) $. (Here $\|\Delta v\|_{L^\infty(B_{2r}(x_0))} =\infty$  is included).
\end{lemm}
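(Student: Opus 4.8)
The plan is to prove this by a rescaling argument that reduces the estimate on $B_{2r}(x_0)$ to the fixed interior estimate \eqref{Bers-local-estimate} of Lemma \ref{Locale-regularity} applied on the reference pair $B_1(0)\subset B_2(0)$. First I would introduce the rescaled function $w(y)\eqdef v(x_0+ry)$ for $y\in B_2(0)$, which belongs to $W^{2,q}(B_2(0))$ whenever $v\in W^{2,q}(B_{2r}(x_0))$. Since $q>N$, the Sobolev--Morrey embedding $W^{2,q}(B_1(0))\hookrightarrow C^1(\overline{B_1(0)})$ holds with a constant $C=C(N,q)$, so in particular $\sup_{B_1(0)}|\nabla w|\leq C\,\|w\|_{W^{2,q}(B_1(0))}$.

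Next I would apply Lemma \ref{Locale-regularity} in its interior form with $L=\Delta$, $D_0=B_1(0)$ and $D=B_2(0)$, noting that the cited Bers--John--Schechter estimate is an interior one and only uses the values of $w$ on $B_2(0)$. This gives $\|w\|_{W^{2,q}(B_1(0))}\leq K\big(\|\Delta w\|_{L^\infty(B_2(0))}+\|w\|_{L^\infty(B_2(0))}\big)$ with $K=K(N,q)$ fixed, because the geometric data $\delta(B_2(0))$ and $d(B_1(0),\partial B_2(0))$ of the reference configuration are absolute constants. Combining this with the embedding yields $|\nabla w(y)|\leq CK\big(\|\Delta w\|_{L^\infty(B_2(0))}+\|w\|_{L^\infty(B_2(0))}\big)$ for every $y\in B_1(0)$.

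Finally I would undo the scaling. The chain rule gives $\nabla_y w(y)=r\,(\nabla v)(x_0+ry)$ and $\Delta_y w(y)=r^2\,(\Delta v)(x_0+ry)$, whence $\|\Delta w\|_{L^\infty(B_2(0))}=r^2\|\Delta v\|_{L^\infty(B_{2r}(x_0))}$ and $\|w\|_{L^\infty(B_2(0))}=\|v\|_{L^\infty(B_{2r}(x_0))}$. Writing $x=x_0+ry$ with $y\in B_1(0)$, so that $x$ ranges over $B_r(x_0)$, and using $|\nabla v(x)|=r^{-1}|\nabla_y w(y)|$, the previous bound becomes
\[
|\nabla v(x)|\leq CK\left(r\,\|\Delta v\|_{L^\infty(B_{2r}(x_0))}+\frac{1}{r}\,\|v\|_{L^\infty(B_{2r}(x_0))}\right),
\]
so we may take $K_1=CK$. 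The case $\|\Delta v\|_{L^\infty(B_{2r}(x_0))}=\infty$ makes the right-hand side infinite and the inequality trivially true.

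The main thing to watch is that $K_1$ is genuinely independent of $r$, $x_0$ and $v$: this is precisely what the rescaling buys us, since both the embedding constant $C$ and the interior-estimate constant $K$ are computed once and for all on the fixed balls $B_1(0)\subset B_2(0)$, while every dependence on $r$ is carried explicitly by the powers $r$ and $1/r$ produced by the chain rule. The only subtlety is the mismatch between the literal $W^{2,q}_0$ hypothesis of Lemma \ref{Locale-regularity} and the fact that $w$ need not vanish on $\partial B_2(0)$; if one insists on the literal hypothesis one applies the estimate to $\eta w$ for a cutoff $\eta\in C_c^\infty(B_2(0))$ with $\eta\equiv 1$ on $B_1(0)$, and the commutator term $2\nabla\eta\cdot\nabla w$ is absorbed via the standard interpolation inequality $\|\nabla w\|_{L^q}\leq \varepsilon\|D^2 w\|_{L^q}+C_\varepsilon\|w\|_{L^q}$ on $B_2(0)$, which leaves the final constant still of the form $K_1=K_1(N,q)$.
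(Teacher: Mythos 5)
Your proof is correct and follows essentially the same route as the paper: rescale to the reference configuration $B_1(0)\subset B_2(0)$ via $w(y)=v(x_0+ry)$, apply the Bers--John--Schechter interior estimate \eqref{Bers-local-estimate} there, and undo the scaling using $\nabla_y w=r\nabla v$, $\Delta_y w=r^2\Delta v$. In fact you are more careful than the paper on two points it silently glosses over --- the Sobolev--Morrey embedding $W^{2,q}\hookrightarrow C^1$ needed to pass from the $W^{2,q}(B_1(0))$ bound to a pointwise gradient bound, and the cutoff argument resolving the mismatch between the $W^{2,q}_0$ hypothesis in Lemma \ref{Locale-regularity} and the non-vanishing of $w$ on $\partial B_2(0)$.
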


\begin{proof}
Let $x_0 \in \Omega$, and let $r : 0< 2r < d(x_0)$ (then $B_{2r}(x_0) \subset \Omega$). We make the change of variable $x_0 + ry=x$ and define $w(y) = v(x)$, for $y \in \overline {B_{2}(0)}$. Then we have 
\begin{equation} \label{chang-var}
\nabla w(y) = r\nabla v(x),\quad \Delta w(y) =r^2 \Delta v(x) \text{    for  } |y|\leq 2
\end{equation}
and by  using \eqref{Bers-local-estimate}, we obtain
\begin{equation}\label{libermann}
|\nabla w(y)|  \leq K_1 \left ( \|\Delta w\|_{L^\infty(B_{2}(0))} + \|w\|_{L^\infty (B_{2}(0))} \right ), \text{ for all  } y \in B_{1}(0)
\end{equation}
for some constant $K_1>0$ independent of $r$ and $x_0$. Hence, the local estimate \eqref{local-bound} follows from \eqref{chang-var} and \eqref{libermann}.
\end{proof}
\begin{lemm}\label{esti-grad}
Assume hypothesis in Theorem \ref{regularity}. Then, any weak solution $u$ to $({\rm P_1})$ in $\mathcal  C^+_{\phi_{\alpha,\beta}} (\Omega)$  satisfies
\begin{equation}\label{gradient-estimate}
|\nabla u(x)|\leq c d(x)^{\frac{1-\alpha-\beta}{1+\alpha}} \text{ for all }x\in \Omega.
\end{equation}
\end{lemm}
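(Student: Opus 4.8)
The plan is to derive the boundary gradient bound \eqref{gradient-estimate} by applying the pointwise estimate \eqref{local-bound} at each $x_0\in\Omega$ with the radius $r$ chosen proportional to $d(x_0)$. Concretely, for $x_0$ in a boundary strip I would set $r=r(x_0)=d(x_0)/4$, so that $2r<d(x_0)$ and hence $B_{2r}(x_0)\subset\Omega$; moreover, for every $x\in B_{2r}(x_0)$ the triangle inequality gives $\tfrac12 d(x_0)\le d(x)\le\tfrac32 d(x_0)$, so $d(x)$ is comparable to $d(x_0)$ uniformly on the ball. Since $u\in C^2(\Omega)$, the regularity hypothesis needed for \eqref{local-bound} holds on any ball interior to $\Omega$, and both sup-norms on the right of \eqref{local-bound} are finite. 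The membership $u\in\mathcal C^+_{\phi_{\alpha,\beta}}(\Omega)$ together with $\phi_1\sim d$ (Hopf's lemma) yields the two-sided bound $c_1 d^{\frac{2-\beta}{1+\alpha}}\le u\le c_2 d^{\frac{2-\beta}{1+\alpha}}$, which is what transfers to uniform control on each such ball.

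The second step is to estimate the two sup-norms. From the upper bound on $u$ and $d(x)\sim d(x_0)$ I obtain $\|u\|_{L^\infty(B_{2r}(x_0))}\le c\, d(x_0)^{\frac{2-\beta}{1+\alpha}}$. For the Laplacian I use the equation together with the lower bound on $u$:
\begin{equation*}
|\Delta u|=\frac{1}{d^\beta u^\alpha}\le \frac{c}{d^{\,\beta+\alpha\frac{2-\beta}{1+\alpha}}}=c\, d^{-\frac{2\alpha+\beta}{1+\alpha}},
\end{equation*}
so that $\|\Delta u\|_{L^\infty(B_{2r}(x_0))}\le c\, d(x_0)^{-\frac{2\alpha+\beta}{1+\alpha}}$. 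Substituting these into \eqref{local-bound} with $r\sim d(x_0)$ gives
\begin{equation*}
|\nabla u(x_0)|\le c\Big(d(x_0)\, d(x_0)^{-\frac{2\alpha+\beta}{1+\alpha}}+d(x_0)^{-1}\, d(x_0)^{\frac{2-\beta}{1+\alpha}}\Big).
\end{equation*}

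The crucial observation, which I expect to be the heart of the computation, is that the two exponents coincide: both $1-\frac{2\alpha+\beta}{1+\alpha}$ and $-1+\frac{2-\beta}{1+\alpha}$ equal $\frac{1-\alpha-\beta}{1+\alpha}$. Hence the two contributions in \eqref{local-bound} balance exactly, and it is precisely the scaling $r\sim d(x_0)$ that makes the estimate optimal rather than lossy; this yields $|\nabla u(x_0)|\le c\, d(x_0)^{\frac{1-\alpha-\beta}{1+\alpha}}$ for all $x_0$ with $d(x_0)\le\delta_0$. For the remaining points, $d(x_0)\ge\delta_0$, the solution $u$ is bounded away from $0$ and $-\Delta u$ is bounded, so $u$ is classically smooth on this interior compact region and $|\nabla u|$ is bounded, while $d(x_0)^{\frac{1-\alpha-\beta}{1+\alpha}}$ is bounded below; the estimate then holds trivially after enlarging $c$. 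The only genuine obstacle is organizational rather than deep: one must carefully verify that $B_{2r}(x_0)$ remains inside $\Omega$ with $d\sim d(x_0)$ throughout, so that the two-sided bounds on $u$ transfer to uniform sup-norm bounds on the ball, and then check the exponent arithmetic that collapses the two terms into a single power of $d(x_0)$.
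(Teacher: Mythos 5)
Your proposal is correct and follows essentially the same route as the paper: apply the rescaled interior gradient estimate \eqref{local-bound} on balls $B_{2r}(x_0)$ with $r$ proportional to $d(x_0)$ (the paper takes $r=d(x)/3$, you take $d(x_0)/4$), use the two-sided bounds $a\,d^{\frac{2-\beta}{1+\alpha}}\le u\le b\,d^{\frac{2-\beta}{1+\alpha}}$ coming from $u\in\mathcal C^+_{\phi_{\alpha,\beta}}$ to control $\|u\|_{L^\infty}$ and $\|\Delta u\|_{L^\infty}$ on the ball, and verify that both resulting exponents equal $\frac{1-\alpha-\beta}{1+\alpha}$. Your separate treatment of the interior region $d(x_0)\ge\delta_0$ is harmless but unnecessary, since the scaling argument applies verbatim at every point of $\Omega$.
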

\begin{proof}
Let $x \in \Omega$ and set $r= \dfrac{d(x)}{3}$, $v=u$, (so $\Delta v  = \Delta u =d^{-\beta}u^{-\alpha}$) and we take $x_0 = x$. Let us note that 
$$B_{2r}(x)  \subset A =\{z \in \Omega : \dfrac{d(x)}{3}\leq d(z) \leq \dfrac{5}{3} d(x)\}\subset \Omega.$$
Using \eqref{local-bound}, we obtain
\begin{equation} \label{aaa}
|\nabla u(x)| \leq K_2\left (d(x) \|d^{-\beta}u^{-\alpha}\|_{L^\infty(A)} + \dfrac{1}{d(x)} \|u\|_{L^\infty(A)}\right )
\end{equation}
where $K_2 =3K_1$. Since $u\in\mathcal  C^+_{\phi_{\alpha,\beta}} (\Omega)$,
we have that
$a d(x)^{\frac{2-\beta}{1+\alpha}}\leq u(x)\leq b d(x)^{\frac{2-\beta}{1+\alpha}}$ for some $a,b>0$. Then,
\begin{equation}\label{bbb}
d(x)\|d^{-\beta}u^{-\alpha}\|_{L^\infty(A)} \leq a d(x) \| d^{-\beta}d^{\frac{-(2-\beta)\alpha}{1+\alpha}}\|_{L^\infty(A)} = a' d(x)^{\frac{1-\alpha-\beta}{1+\alpha}}
\end{equation}
and 
\begin{equation}\label{ccc}
 \frac{1}{d(x)}\|u\|_{L^\infty(A)} \leq b d(x) \|d^{\frac{2-\beta}{1+\alpha}}\|_{L^\infty(A)}  = b' d(x)^{\frac{1-\alpha-\beta}{1+\alpha}}.
\end{equation} 
Then the estimate \eqref{gradient-estimate} follows from \eqref{aaa}, \eqref{bbb} and \eqref{ccc}.
\end{proof}
\textbf{Proof of Theorem \ref{regularity}}
Indeed, reasoning as in [{\sc Lazer-Mc Kenna}] \cite{LaKe} by rectifying the boundary using the smoothness of $\partial\Omega$ and a partition of the unity, the problem of finding $q>1$ such that $\nabla u \in L^q(\Omega)$ is reduced from Lemma \ref{esti-grad} to 
\begin{eqnarray*}
\int_{\Omega}d(x)^{\frac{q(1-\alpha-\beta)}{1+\alpha}}<\infty,
\end{eqnarray*}
 that is $\frac{q(1-\alpha-\beta)}{1+\alpha}+1>0$, which gives the result.\qed

\end{document}